\newcommand{\lko}{{ L}_{\kappa\omega}}
\newcommand{\lkk}{{ L}_{\kappa\kappa}}
\renewcommand{\L}{{\cal L}}
\def\bP{{\bf P}}
\def\mm{\mathcal{M}}
\def\mn{\mathcal{N}}
\def\A{\mathcal{A}}
\newenvironment{proof}{{\bf Proof: }}{\ $\Box$}
\newtheorem{theorem}{Theorem}
\newtheorem{definition}[theorem]{Definition}
\newtheorem{proposition}[theorem]{Proposition}
\newtheorem{corollary}[theorem]{Corollary}
\newtheorem{example}[theorem]{Example}
\def\P{\mathbb{P}}
\def\cN{{\cal N}}
\def\cM{{\cal M}}
\def\lko{L_{\kappa\omega}}
\def\lkk{L_{\kappa\kappa}}
\def\bz{\bar z}
\def\by{\bar y}
\def\bc{\bar c}
\begin{document}

\title{Positive logics\thanks{     Both authors would like to thank the National Science Foundation no: DMS 1833363. The first author would like to thank the Israel Science Foundation (ISF) grant no: 1838/19 for partial support of this  research and the European Research Council (ERC) advanced grant (Dependent Classes) no: 338821. The second  author would like to thank  the Academy of Finland, grant no: 322795, and funding from the European Research Council (ERC) under the
European Union’s Horizon 2020 research and innovation programme (grant agreement No
101020762).  Publication number 1194. The authors are grateful to Tapani Hyttinen for reading the manuscript and making helpful comments.}}
\author{Saharon Shelah \\
Institute of Mathematics\\ Hebrew University, Jerusalem,
Israel\\
Rutgers University, New Jersey, USA\\
\and Jouko V\"a\"an\"anen\\
Department of Mathematics and Statistics\\ University of Helsinki,
Finland\\
ILLC, University of Amsterdam\\
Amsterdam, Netherlands 
}
%\date{}
\maketitle
\begin{abstract}
Lindstr\"om's Theorem characterizes first order logic as the maximal logic satisfying the Compactness Theorem and the Downward L\"owen\-heim-Skolem Theorem. If we do not assume that logics are closed under negation, there is an obvious extension of first order logic with the two model theoretic properties mentioned, namely existential second order logic. We show  that existential second order logic has a whole family of proper extensions satisfying the Compactness Theorem and the Downward L\"owenheim-Skolem Theorem. Furthermore, we show that in the context of negation-less logics, \emph{positive logics}, as we call them, there is no strongest extension of first order logic with the Compactness Theorem and the Downward L\"owenheim-Skolem Theorem.

\end{abstract}

\section{Introduction}

Our motivating question in this paper is whether we can generalize Lindstr\"om's Theorem from first order logic to $\Sigma^1_1$, that is, existential second order logic. In the case of first order logic Lindstr\"om's Theorem says that first order logic is maximal with the Compactness Theorem\footnote{The $(\kappa,\lambda)$-Compactness Theorem says: Every theory of size $\le\kappa$, every  subset of size $<\lambda$ of which has a model, has a model. Compactness Theorem means  $(\omega,\omega)$-Compactness Theorem.} and the Downward L\"owen\-heim-Skolem Theorem\footnote{The Downward L\"owen\-heim-Skolem Theorem down to $\kappa$ says: Every sentence in a countable vocabulary, which has a model, has a model of size $\le \kappa$. ``Down to $<\kappa$" means ``has a model of size $<\kappa$". The Downward L\"owen\-heim-Skolem Theorem means the  L\"owen\-heim-Skolem Theorem down to $\aleph_0$.} among logics satisfying some minimal closure conditions \cite{MR0244013}. One of the assumed closure conditions is closure under negation. What happens if we drop this assumption?  It seems that this question was first explicitly raised in \cite{MR2134728}. The Compactness Theorem and the Downward L\"owen\-heim-Skolem Theorem make perfect sense, whether we have negation or not. These two conditions make no reference to negation. 

In earlier related work  (\cite{MR2116833})  we 
showed that a strong form of Lindstr\"om's Theorem  fails for
 extensions of \(\lko\) and \(\lkk\):
For weakly compact \(\kappa\) there is no strongest  extension of
\(\lko\) with the 
\((\kappa,\kappa)\)-compactness 
property and the L\"owenheim-Skolem Theorem down to
\(\kappa\). With an additional set-theoretic assumption,
 there is no strongest  extension of
\(\lkk\) with the 
\((\kappa,\kappa)\)-Compactness Theorem and the 
L\"owenheim-Skolem theorem down to \(<\kappa\).

Obviously first order logic itself is not  maximal if negation is dropped because existential second order logic $\Sigma^1_1$, and even $\Sigma^1_{1,\delta}$ (also denoted $PC_\Delta$), i.e. existential second order quantifiers followed by a countable conjunction of first order sentences, which clearly satisfy both the Compactness Theorem and the Downward L\"owen\-heim-Skolem Theorem,  also properly extend first order logic. 

We are  led  to the following (interrelated) questions, all in the context of logics where closure under negation is \emph{not} assumed:
\medskip

\noindent{\bf Question 1:} Is $\Sigma^1_1$ (or rather $\Sigma^1_{1,\delta}$) maximal among logics satisfying the Compactness Theorem and the Downward L\"owen\-heim-Skolem Theorem?
\medskip

\noindent{\bf Question 2:} Is there an extension of $\Sigma^1_1$ (or $\Sigma^1_{1,\delta}$) which is maximal among logics satisfying the Compactness Theorem and the Downward L\"owen\-heim-Skolem Theorem?
\medskip

\noindent{\bf Question 3:} Is there a characterization of $\Sigma^1_1$ (or $\Sigma^1_{1,\delta}$) as maximal among logics satisfying some model-theoretic conditions?
\medskip

\noindent{\bf Question 4:} Is there an extension of $\Sigma^1_1$ (or $\Sigma^1_{1,\delta}$) which is maximal (or even strongest) among logics satisfying some model-theoretic conditions?
\medskip

In this paper we formulate Questions 1 and 2 in exact terms. We answer Question 1 negatively. As to Question 2 we show that there is no strongest\footnote{By strongest extension we mean one which contains every other as a sublogic.} extension of $\Sigma^1_1$ satisfying the Compactness Theorem and the Downward L\"owen\-heim-Skolem Theorem. The existence of a maximal one (which has no proper such extension) remains open. Questions 3 and 4 remain completely unanswered. Admittedly, Question 4 is a little vague as both  ``extension"
and ``model-theoretical conditions" are left open.

To answer the above Questions 1 and 2 we introduce a family of new generalized quantifiers associated with the very natural and intuitive concept of the density of a set of reals. These quantifiers are defined for the purpose of solving the said questions and may lack wider relevance, although the general study of logics without negation is so undeveloped that it may be too early to say what is relevant and what is not.
\medskip

\noindent{\bf Notation:} We use $\mm$ and $\mn$ to denote structures, and $M$ and $N$ to denote their universes, respectively. For finite sequences $s$ and sets $a$, we use $s\char 94\langle a\rangle$ to denote the extension of $s$ by the set $a$. For sequences of length $\le\omega$, $s\triangleleft s'$ means that $s$ is an initial segment of $s'$. The empty sequence is denoted $\emptyset$. A subset $A$ of $2^\omega$ is said to be \emph {dense} if for all $s\in 2^{<\omega}$ there is $s'\in A$ such that $s\triangleleft s'$. We use $\P(\omega)$ to denote the power-set of $\omega$.

\section{Positive logics}
We define the concept of a \emph{positive logic,} meaning a logic without negation, except in front of atomic (and first order) formulas.
We have to be careful about substitution in this context. If we are too lax about the substitution\footnote{The Substitution Property for a abstract logic $L^*$ says that if $\phi$ is in $L^*$, $P$ is an $n$-ary predicate symbol in the vocabulary of $\phi$ and $\psi(x_1,\ldots,x_n)$ is a formula of $L^*$, then the result of substituting $\psi(t_1,\ldots,t_n)$ to occurrences of $P(t_1,\ldots,t_n)$ in $\phi$ is again in $L^*$. For details, see \cite[Def. 1.2.3]{MR819533}.} of formulas into atomic formulas we end up having a logic which is closed under negation, which is not what we want. Substitution is very natural, but it is not needed in Lindstr\"om's characterization of first order logic.

One may ask whether a logic deserves to be called a \emph{logic} if it is not closed under negation?
We do not try to answer this question, but merely point out that there are several logics that do not have a negation in the sense that we have in mind, i.e. in the sense of classical logic. Take, for example, constructive logic. Although it has a negation, it does not have the Law of Excluded Middle, so its negation does not function in the way we mean when we ask whether a logic is closed under negation. In our sense constructive logic is not closed under negation. Another example is continuous logic \cite{MR2436146} and the related positive logic of \cite{MR2371196}. We have already mentioned  existential second order logic $\Sigma^1_1$ and its stronger form, $\Sigma^1_{1,\delta}$. 
%Note that the class of infinite models of a recursive first order theory in a finite vocabulary is $\Sigma^1_1$ \cite{MR0051186}. 
In the same category as $\Sigma^1_1$ are Dependence logic \cite{MR2351449} and Independence Friendly Logic \cite{MR2807973}. Transfinite game quantifiers yield infinitary logics which are not closed under negation, due to non-determinacy \cite{MR881268}.  In the finite context there is the complexity class non-deterministic polynomial time NP, which is equivalent to existential second order logic on finite models, of which it is not known whether it is closed under negation. In this paper we introduce new examples of logics without negation.

\begin{definition}
A \emph{positive logic} is an abstract logic\footnote{An abstract logic (or ``a generalized first order logic"), in the sense of \cite{MR0244013} is a pair $L=(\Sigma,T)$, where $\Sigma$ is an arbitrary set and $T$ is a binary relation between members of $\Sigma$ on the one hand and structures on the other. Members of $\Sigma$ are called $L$-sentences. Classes of the form $\{\mm:T(\phi,\mm)\}$, where $\phi$ is an $L$-sentence, are called $L$-characterizable classes. Abstract logics are assumed to satisfy five axioms expressed in terms of $L$-characterizable classes. The axioms correspond to being closed under isomorphism, conjunction, negation, permutation of symbols, and ``free" expansions.} in the sense of \cite{MR0244013} (see also \cite{MR819533}) which contains first order logic and is closed under disjunction, conjunction, and first order quantifiers $\exists$ and $\forall$. We do not require closure under negation, nor closure under substitution. 
\end{definition}

\begin{example}
\begin{enumerate}
\item First order logic  is a positive logic.
\item $\Sigma^1_1$ and $\Sigma^1_{1,\delta}$ are positive logics.
\item If $L$ is a positive logic, then so is $\Sigma^1_1(L)$, the closure of $L$ under existential second order quantification.
\end{enumerate}
\end{example}

\section{A class of new quantifiers}\label{der}

In the tradition of \cite{MR244012} we define our new generalized quantifiers by first specifying a class of structures, closed under isomorphisms.

Let $\tau_d$ be the vocabulary $\{R_0,R_1,R_2,R_3,R_4\}$ consisting of binary predicates $R_0,R_1,R_2$ and unary predicates $R_3,R_4$.

\begin{example}\label{ee}
A canonical example of a $\tau_d$-structure is the model $$\mm_{A}=(M,R^{\mm_A}_0,R^{\mm_A}_1,R^{\mm_A}_2,R^{\mm_A}_3, R^{\mm_A}_4),$$ where $A\subseteq 2^\omega$ and
\begin{itemize}
\item $M=2^{<\omega}\cup A$.
\item  $R_i^{\mm_A}=\{(a,b)\in (2^{<\omega})^2: b=a\char 94\langle i\rangle\}$ ($i=0,1$).
\item  $R_2^{\mm_A}=\{(a,b)\in M\times M: a\triangleleft b\}$.
\item  $R_3^{\mm_A}=\{\emptyset\}.$
\item  $R_4^{\mm_A}=M.$
\end{itemize}\end{example}

\begin{definition}\label{d}
For $n<\omega$ and $\eta\in 2^n$ we define $\psi_\eta(x)$ as:
{\setlength\arraycolsep{1pt}
$$\begin{array}{rl}
R_4(x)\wedge\exists y_0\ldots\exists y_n&(R_3(y_0)\wedge  \bigwedge_{i\le n}R_4(y_i)\wedge \bigwedge_{i<n} y_iR_{\eta(i)}y_{i+1}\wedge\bigwedge_{i\le n}y_i R_2 x).
\end{array}$$}
For a $\tau_d$-model $\cM$ and $a\in M$ we define
 $$\begin{array}{l}
 \Omega({\cM, a})=\{\eta\in 2^{<\omega}: \cM\models\psi_\eta(a)\}\\
\Omega(\cM)=\{\eta\in 2^{\omega} : \mbox{ for some $a\in M$, }
 \eta\restriction n\in \Omega({\cM,a})\mbox{ for all $n<\omega$}\}.
\end{array}$$ If $\eta\in \Omega({\cM, a})$, we say that $a$ \emph{represents} $\eta$ in $\cM$.
We also say that $\cM$ \emph{represents} the set $\Omega(\cM)$.\end{definition}

One element $a$ can represent several $\eta$, but later in Section~\ref{ie} we impose a further restriction to the effect that representation is unique.
 
Note that if $\cM$ is a $\tau_d$-model, then the property ``$\Omega(\cM)$ is dense" is a $\Sigma^1_1$-property of $\cM$.  Since we aim at a logic which goes beyond existential second order logic, we have to sharpen the requirement of density. The property of $\tau_d$-models we are interested in is the property that ``$\Omega(\cM)\setminus A$ is dense" for some preassigned set $A\subseteq 2^\omega$ of reals.

\begin{definition}\label{123}
Let $A\subseteq 2^\omega$. We define the Lindstr\"om quantifier $Q_A$ as follows. Suppose $\mm$ is a model and $\bc\in M^k$. Then we define
\begin{equation}\label{Q}
(Q_Ax_0x_1)(\psi_0(x_0,x_1,\bc),\psi_1(x_0,x_1,\bc),\psi_2(x_0,x_1,\bc),\psi_3(x_0,\bc),\psi_4(x_0,\bc))
\end{equation}to be true in $\mm$
if and only if $\Omega(\mm_{\bar{\psi}})\setminus A$ is dense, where 
$$\bar \psi=(\psi_0(x_0,x_1,\bc),\psi_1(x_0,x_1,\bc),\psi_2(x_0,x_1,\bc),\psi_3(x_0,\bc),\psi_4(x_0,\bc)),$$
$$\mm_{\bar{\psi}}=(M,R^\mn_0,R^\mn_1,R^\mn_2,R^\mn_3, R^\mn_4),$$ and
\begin{itemize}
\item  $R_i^\mn=\{(a,b)\in M^2: \cM\models\psi_i(a,b,\bc)\}$ ($i=0,1,2$).
\item $R_3^\mn=\{a\in M:\cM\models\psi_3(a,\bc)\}$.
\item $R_4^\mn=\{a\in M : \cM\models\psi_4(a,\bc)\}$.
%\item $R_5^\mn=\{a\in M : \cM\models\psi_5(a,\bc)\}$.

\end{itemize}
\end{definition}

\begin{definition}Suppose $A\subseteq 2^\omega$.
We define the positive logic $L^d_A$ as the closure of first order logic under conjunction, disjunction, first order quantifiers $\exists$ and $\forall$, the existential second order quantifier $\exists R$, where $R$ is a relation symbol, and the generalized quantifier $Q_A$. We denote by $L^{d,\omega}_A$ the extension of $L^d_A$ obtained by allowing countable conjunctions as a logical operation. Finally, the proper class $L^{d,\infty}_A$ denotes the extension of $L^d_A$ obtained by allowing arbitrary set-size conjunctions as a logical operation.
\end{definition}

With the obvious definition of what it means for a positive logic to be a sublogic of another, we can immediately observe that $\Sigma^1_1$ is a sublogic of $L^d_A$, and $\Sigma^1_{1,\delta}$ is a sublogic of $L^{d,\omega}_A$ whatever $A$ is.

\begin{example}\label{e}Suppose $A\subseteq 2^\omega$.
The class $K_A$ of $\tau_d$-models $\mm$ satisfying ``$\hspace{0.5mm}\Omega(\mm)\setminus A$ is dense" is (trivially) definable in $L^d_A$, as $\mm\in K_A$ if and only if $\mm\models\psi_A$, where $$\psi_A=(Q_Ax_0x_1)(R_0(x_0,x_1),R_1(x_0,x_1),R_2(x_0,x_1),R_3(x_0),R_4(x_0)).$$ The model $\mm_B$ of Example~\ref{ee} is in $K_A$, if and only if $B\setminus A$ is dense.
\end{example}

For future reference we make the following observation:
If $\eta\in 2^n$, $\by=(y_0,\ldots,y_{n-1})$,  $\bar \psi$ as in Definition~\ref{123} is a 5-tuple of formulas of $L^d_A$, 
$\bz=(z_0,\ldots,z_{k-1})$, and   $\Gamma^{n,k}_{\bar \psi,\eta}(\by,x,\bz)\in L^d_A$
is the conjunction of 
$$\begin{array}{ll}
\psi_4(y_i,\bz)&\mbox{ for ${i\le n}$}\\
\psi_4(x,\bz)\wedge \psi_3(y_0,\bz)\\
\psi_{\eta(i)}(y_i,y_{i+1},\bz)&\mbox{ for ${i< n}$}\\
\psi_2(y_i,x,\bz)&\mbox{ for ${i\le n}$},\\
\end{array}$$
then (\ref{Q}) is equivalent to 
\begin{equation}\label{QQ}
\begin{array}{l}
\mbox{For every $\sigma\in 2^{<\omega}$ there are $\eta\in 2^\omega\setminus A$ extending $\sigma$} \\
\mbox{and $a\in M$ such that  for some function $n\mapsto \langle b^n_0,\ldots,b^n_{n-1}\rangle$}\\
\mbox{from $\omega$ to $M^n$ we have }\mm\models\Gamma^{n,k}_{\bar \psi,\eta\restriction n}(b^n_0,\ldots,b^n_{n-1},a,\bc)\mbox{ for all $n<\omega$}.
\end{array}
\end{equation}

We proceed to proving that the logic $L^d_A$, for suitably chosen $A\subseteq 2^\omega$, satisfies the Compactness Theorem and the Downward L\"owenheim-Skolem Theorem, and also properly extends $\Sigma^1_{1}$.

\section{The Compactness Theorem}

We use the well-established method of ultraproducts to prove the Compactness Theorem of $L^d_A$.

\begin{theorem}[\L o\' s Lemma for $L^d_A$]\label{Los}Suppose $2^\omega\setminus A$ is dense. Suppose $\cM_i$, $i\in I$, are models and $D$ is an ultrafilter on a set $I$. Let $\cM=\prod_{i\in I}\cM_i/D$, $f_0,\ldots,f_{n-1}\in\prod_{i\in I}M_i$ and $\phi(x_0,\ldots,x_{n-1})$ in $ L^d_A$ (or even in $L^{d,\infty}_A$). Then 
$$\{i\in I : \cM_i\models \phi(f_0(i),\ldots,f_{n-1}(i))\}\in D\Rightarrow
\cM\models \phi(f_0/D,\ldots,f_{n-1}/D).$$
\end{theorem}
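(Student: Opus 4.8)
The plan is to prove the stated implication (and only this direction can hold, since a positive logic has no negation) by induction on the build-up of $\phi$, carried out uniformly over all vocabularies, index sets and ultrafilters. The induction runs through the operations generating $L^{d,\infty}_A$: first order formulas, arbitrary conjunctions, finite disjunctions, the quantifiers $\exists,\forall$, the second order quantifier $\exists R$, and $Q_A$. I note in advance that conjunctions need only upward closure of $D$, so even set-sized conjunctions (the case of $L^{d,\infty}_A$) cause no trouble, while we never confront an infinite disjunction; this asymmetry is exactly what lets the one-directional lemma hold for an arbitrary $D$.

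The routine cases go as usual. For atomic and first order $\phi$ the classical \L o\'s theorem gives both directions. If $\phi=\bigwedge_{j\in J}\phi_j$ and the set where it holds is in $D$, then each $\{i:\cM_i\models\phi_j\}$ contains it, hence lies in $D$, so each $\phi_j$ holds in $\cM$ by the induction hypothesis; for $\phi=\phi_0\vee\phi_1$ the union of the two sets is in $D$, so one of them is. For $\exists x\,\psi$ I choose witnesses on the relevant $D$-large set, package them as some $g\in\prod_iM_i$, and apply the hypothesis to $\psi(g/D,\dots)$; for $\forall x\,\psi$ an arbitrary $h/D$ works since the set where $\psi(h(i),\dots)$ holds contains that where $\forall x\,\psi$ holds. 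For $\exists R\,\chi$ I pick a witnessing interpretation $R_i\subseteq M_i^m$ on a $D$-large set, set $R^*\subseteq M^m$ by declaring $\bar g/D\in R^*$ iff $\{i:\bar g(i)\in R_i\}\in D$ (well defined by the filter axioms), observe $(\cM,R^*)=\prod_i(\cM_i,R_i)/D$, and apply the hypothesis to $\chi$ in the expanded vocabulary.

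The substantial case is $Q_A$, say $\phi=(Q_Ax_0x_1)(\bar\psi)$. Writing $\Omega_i=\Omega((\cM_i)_{\bar\psi})$ and $\Omega=\Omega(\cM_{\bar\psi})$ for the associated $\tau_d$-structures of Definition~\ref{123}, the assumption is $S=\{i:\Omega_i\setminus A\text{ is dense}\}\in D$ and the goal is that $\Omega\setminus A$ is dense. The finite formulas $\psi_\eta$, hence the matrices $\Gamma^{n,k}_{\bar\psi,\eta}$ of the reformulation~(\ref{QQ}), are positive existential first order combinations of $\psi_0,\dots,\psi_4$, so the induction hypothesis together with the cases already settled yields the forward \L o\'s property for each $\psi_\eta$: if $D$-many $\cM_i$ satisfy $\psi_{\eta\restriction n}(a_i)$ then $\cM_{\bar\psi}\models\psi_{\eta\restriction n}(g/D)$ with $g(i)=a_i$. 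It therefore suffices, given $\sigma\in2^{<\omega}$, to produce a real $\eta\notin A$ with $\sigma\triangleleft\eta$ together with a single $g/D$ representing every $\eta\restriction n$ in $\cM_{\bar\psi}$.

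I expect the one genuine obstacle to be guaranteeing that the limiting real avoids $A$, and this is exactly where the hypothesis that $2^\omega\setminus A$ is dense enters; the argument naturally splits on whether $D$ is countably complete. If $D$ is not countably complete, fix $c:I\to\omega$ with $\{i:c(i)\ge n\}\in D$ for all $n$; using density of $2^\omega\setminus A$ choose a target $\eta\notin A$ with $\sigma\triangleleft\eta$, and for $i\in S$ pick (by density of $\Omega_i$) a representative $a_i$ of some branch of $\Omega_i$ agreeing with $\eta$ on its first $c(i)$ coordinates, so that $\{i:a_i\text{ represents }\eta\restriction n\}\supseteq\{i:c(i)\ge n\}\in D$ for every $n$ and $g/D$ represents $\eta$. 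If $D$ is countably complete, instead pick for each $i\in S$ a branch $\eta_i\in\Omega_i\setminus A$ with $\sigma\triangleleft\eta_i$ and representative $a_i$, and let $\eta$ be their $D$-limit, so $\{i:\eta_i\restriction n=\eta\restriction n\}\in D$ for each $n$ (a finite intersection of $D$-sets); countable completeness then gives $\{i:\eta_i=\eta\}\in D$, whence $\eta=\eta_i\notin A$ for some $i$, while $g/D$ with $g(i)=a_i$ again represents every $\eta\restriction n$. In either case $\eta\in\Omega\setminus A$ extends $\sigma$, so $\Omega\setminus A$ is dense and $\cM\models\phi$, completing the induction.
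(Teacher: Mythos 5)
Your proof is correct and takes essentially the same approach as the paper: the standard inductive cases, and for $Q_A$ a split according to whether $D$ is $\aleph_1$-incomplete (using a counter function/descending chain to realize the target real, with density of $2^\omega\setminus A$ supplying a branch outside $A$) or $\aleph_1$-complete (stabilizing the chosen branches coordinate-wise so the represented real equals some $\eta_i\notin A$). The only cosmetic difference is that in the incomplete case the paper shows every real of $2^\omega$ is represented in the ultraproduct and then intersects with the dense set $2^\omega\setminus A$, whereas you aim directly at a preassigned $\eta\notin A$; the mechanism is identical.
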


\begin{proof}We use induction on $\phi$. The cases corresponding to the atomic formulas, the negated atomic fromulas, conjunction (even infinite conjunction), disjunction, $\exists$, $\forall$, and $\exists R$ (see e.g. \cite[4.1.14]{MR1059055}) are all standard and well known. In the case of disjunction we use the property of ultrafilters that $I_1\cup I_2\in D$ implies $I_1\in D$ or $I_2\in D$. We are left with the induction step for $Q_A$.  Let us denote $f_0(i),\ldots,f_{n-1}(i)$ by $\bar f(i)$ and $f_0/D,\ldots,f_{n-1}/D$ by $\bar f/D$. We   assume
%Claim: %Suppose 
%$$\{i\in I : M_i\models Q_A y_0,y_1\bar{\psi}(y_0,y_1,f_0(i),\ldots,f_{n-1}(i))\}\in D.$$ 
\begin{equation}\label{QA}
J=\{u\in I : \mm_i\models (Q_Ax_0x_1)(\psi_0(x_0,x_1,\bar f(i)),
\ldots,\psi_4(x_0,\bar f(i)))\}\in D
\end{equation}
and demonstrate $\mm\models (Q_A x_0x_1)(\psi_0(x_0,x_1,\bar f/D),
\ldots,\psi_4(x_0,\bar f/D))$.
For $i\in J$ the set $B_i$ of elements $\eta$ of $2^\omega\setminus A$ such that
there are $a_i\in M_i$ and
$b^n_{0,i}\ldots,b^n_{n-1,i}$ in $M_i$ such that  $\mm_i\models\Gamma^{n,k}_{\bar \psi,\eta\restriction n}(b^n_{0,i},\ldots,b^n_{n-1,i},a_i,\bar f(i))$ for all $n<\omega$, is dense.

\medskip

\noindent {\bf Case 1: $D$ is $\aleph_1$-incomplete.}
Let $J=I_0\supseteq I_1\supseteq\ldots$ be a descending chain in $D$ with empty intersection.
We show that the set $B$ of $\eta\in 2^\omega$ such that
there is $a\in M$ such that  for some  $b^n_0,\ldots,b^n_{n-1}$ in $\prod_iM_i/D$ we have $\mm\models\Gamma^{n,k}_{\bar \psi,\eta\restriction n}(b^n_0,\ldots,b^n_{n-1},a,\bar f/D)$ for all $n<\omega$, is the full set $2^\omega$. Since we assume that  $2^\omega\setminus A$ is dense, it follows  that $B\setminus A$ is dense, as we claim. 

Suppose $\eta\in 2^\omega$ is arbitrary. Let $i\in I_{n+1}\setminus I_n$.
Because $B_i$ is dense, there are, for all $n<\omega$, extensions $\eta_i\in 2^\omega$ of $\eta\restriction n$ and elements $a_i,b^n_{i,0},\ldots,b^n_{i,n-1}\in M_i$ such that 
\begin{equation}\label{ss}
\mm_i\models\Gamma^{n,k}_{\bar \psi,\eta_i\restriction n}(b^n_{i,0},\ldots,b^n_{i,n-1},a_i,\bar f(i)) 
\end{equation}
for all $n<\omega$. Let $h(i)=a_i$.
For $i\in I_n\setminus I_{n+1}$ and $m<n$, let $g^n_m(i)=b^n_{i,m}$. Now 
$$\{i\in I : \mm_i\models\Gamma^{n,k}_{\bar \psi,\eta\restriction n}(g^n_0(i),\ldots,g^n_{n-1}(i),h(i),\bar f(i))\}\supseteq I_{n+1}\in D.$$
Hence 
$$\mm\models\Gamma^{n,k}_{\bar \psi,\eta\restriction n}(g^n_0/D,\ldots,g^n_{n-1}/D,h/D,\bar f/D).$$\medskip

\noindent {\bf Case 2: $D$ is $\aleph_1$-complete.}
We show that the set $B$ of $\eta\in 2^\omega\setminus A$ such that
for some $a, b^n_0,\ldots,b^n_{n-1}\in M$ we have $\mm\models\Gamma^{n,k}_{\bar \psi,\eta\restriction n}(b^n_0,\ldots,b^n_{n-1},a,\bar f/D)$ for all $n<\omega$, is dense.
Suppose $\eta\in 2^{n}$.  By the density of $B_i$, for each $i\in J$ there is $\nu_i\in B_i$ extending $\eta$. There is $J_0\subseteq J$ in $D$ such that $\nu_i(n)$ is constant for $i\in J_0$. There is $J_1\subseteq J_0$ in $D$ such that $\nu_i(n+1)$ is constant for $i\in J_{1}$, etc. By $\aleph_1$-completeness we get $J_\infty\in D$ such that $\nu_i(m)$ is constant, say $\eta^*(m)$ for all $m\ge n$ and all  $i\in J_\infty$. Now $\eta^*\in B$ follows easily.
\end{proof}

\begin{corollary}\label{wpp}If $2^\omega\setminus A$ is dense, then:
\begin{enumerate}
\item $L^d_A$ (even $L^{d,\infty}_A$) satisfies the (full) Compactness Theorem.
\item Every sentence of $L^d_A$ with an infinite model has arbitrarily large models. 
\item The only sentences in $L^d_A$ that have a negation (in the usual sense) are the first order (equivalent) ones.\end{enumerate} 
\end{corollary}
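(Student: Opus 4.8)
The plan is to obtain all three items as consequences of the \L o\'s Lemma (Theorem~\ref{Los}), whose sole hypothesis, that $2^\omega\setminus A$ is dense, is the standing assumption here and is therefore available throughout; note that only the single implication stated in that lemma is ever needed. For item~1, I would run the usual ultraproduct proof of compactness. Let $T$ be any set of $L^d_A$-sentences, of arbitrary cardinality, such that every finite subset has a model. Take $I$ to be the set of finite subsets of $T$, choose $\cM_s\models s$ for each $s\in I$, and for $\phi\in T$ set $\hat\phi=\{s\in I:\phi\in s\}$. The family $\{\hat\phi:\phi\in T\}$ has the finite intersection property, so I extend it to an ultrafilter $D$ on $I$ and form $\cM=\prod_{s\in I}\cM_s/D$. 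For each $\phi\in T$ one has $\{s:\cM_s\models\phi\}\supseteq\hat\phi\in D$, so Theorem~\ref{Los} gives $\cM\models\phi$, whence $\cM\models T$. Nothing constrains $|I|$ or the completeness of $D$ — both cases in the lemma are covered — so this yields the full Compactness Theorem, and the identical argument applies to $L^{d,\infty}_A$.

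For item~2 I would simply invoke item~1. Let $\phi$ have an infinite model and let $\lambda$ be an arbitrary cardinal. Expand the vocabulary by fresh constants $c_\alpha$ ($\alpha<\lambda$) and put $T=\{\phi\}\cup\{c_\alpha\neq c_\beta:\alpha<\beta<\lambda\}$. Each inequality is first order, hence in $L^d_A$. Any finite subset of $T$ involves only finitely many of the $c_\alpha$ and is satisfied by the infinite model of $\phi$ upon interpreting them as distinct elements; by item~1, $T$ has a model, which perforce has size $\ge\lambda$. As $\lambda$ is arbitrary, $\phi$ has arbitrarily large models.

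For item~3, suppose $\phi\in L^d_A$ has a negation, i.e.\ there is $\psi\in L^d_A$ whose model class is the complement of $K:=\{\cM:\cM\models\phi\}$. Applying Theorem~\ref{Los} to a family all of whose factors satisfy $\phi$ gives $\{i:\cM_i\models\phi\}=I\in D$, hence the ultraproduct satisfies $\phi$; thus $K$ is closed under ultraproducts, and the same argument with $\psi$ shows that its complement $K^c$ is as well. Both classes are closed under isomorphism. By the classical characterization of finitely axiomatizable classes (Frayne--Morel--Scott; see \cite{MR1059055}) — a class that together with its complement is closed under isomorphism and ultraproducts is the class of models of a single first-order sentence — it follows that $\phi$ is equivalent to a first-order sentence.

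The routine steps are items~1 and~2; the one place that needs care is item~3, where I must extract genuine closure of $K$ and $K^c$ under ultraproducts from the \emph{single} implication furnished by Theorem~\ref{Los}. This works precisely because that closure is only ever invoked in the case where every index model satisfies the sentence in question, which is the direction the lemma provides; granting this, the appeal to the Frayne--Morel--Scott characterization finishes the argument.
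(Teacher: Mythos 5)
Your proof is correct and follows essentially the same route as the paper: item 1 by the standard ultraproduct argument from Theorem~\ref{Los}, item 2 by compactness applied to $\{\phi\}$ plus fresh constants, and item 3 by noting that both the model class and its complement are isomorphism- and ultraproduct-closed and invoking the characterization of first-order definable classes. The one nuance is attribution: the isomorphism-plus-ultraproducts characterization you use in item 3 is not due to Frayne--Morel--Scott alone but also needs the Keisler--Shelah isomorphic-ultrapowers theorem (to pass from isomorphism closure to closure under elementary equivalence), which is precisely the second ingredient the paper cites.
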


\begin{proof} The usual argument gives 1:
Suppose $T$ is a finitely consistent theory in $L^d_A$. Let $I$ be the set of finite subsets of $T$ and for each $i\in I$, let $\mm_i\models i$. If $\phi\in T$, let $A_\phi=\{i\in I : \phi\in i\}$. Then the family ${\cal J}=\{A_\phi : \phi\in T\}$ has the finite intersection property. Let $D$ be a non-principal ultrafilter on $I$ extending $\mathcal{J}$. 
%$D$ is $\aleph_1$-incomplete because the meet of $I_n$ 
Now if $\phi\in T$, then $\prod_D\mm_i\models\phi$, as
$\{i\in I : \mm_i\models\phi\}\supseteq A_\phi\in D.$

Claim 2 follows immediately from Claim from 1. Claim 3 follows from the ultraproduct characterization of first order model classes (see e.g. \cite[4.1.12]{MR1059055}) and the characterization of elementary equivalence in terms of ultrapowers \cite{MR0297554}. \end{proof}

\begin{theorem}[Robinson's Consistency Lemma for $L^d_A$] Suppose $2^\omega\setminus A$ is dense.
Suppose $T_1$ and $T_2$ are consistent $L^d_A$-theories with vocabularies $\tau_1$ and $\tau_2$, respectively, such that $T_1\cap T_2$ is  complete with respect to first order logic in the vocabulary $\tau_1\cap\tau_2$. Then $T_1\cup T_2$ is consistent.
\end{theorem}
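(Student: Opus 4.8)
The plan is to prove joint consistency semantically, by amalgamating a model of $T_1$ and a model of $T_2$ over a common first order elementary core, using ultrapowers together with the one-directional transfer of truth already established in Theorem~\ref{Los}. Write $\tau_0=\tau_1\cap\tau_2$. Since $T_1$ and $T_2$ are consistent, fix models $\mm_1\models T_1$ and $\mm_2\models T_2$. Let $T_0$ be the set of first order $\tau_0$-sentences that hold in every model of $T_1\cap T_2$; by hypothesis $T_0$ is a complete first order $\tau_0$-theory. Because $T_1\cap T_2\subseteq T_1$ and $T_1\cap T_2\subseteq T_2$, both reducts $\mm_1\restriction\tau_0$ and $\mm_2\restriction\tau_0$ satisfy $T_0$, and hence, satisfying the same complete first order theory, they are first order elementarily equivalent.

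Next I would invoke the ultrapower characterization of elementary equivalence \cite{MR0297554}: there is an index set $I$, an ultrafilter $D$ on $I$, and an isomorphism $\pi\colon (\mm_1\restriction\tau_0)^I/D \cong (\mm_2\restriction\tau_0)^I/D$. I then apply the \emph{same} $I$ and $D$ to the full structures. Since ultrapowers commute with reducts, $(\mm_1^I/D)\restriction\tau_0=(\mm_1\restriction\tau_0)^I/D$ and $(\mm_2^I/D)\restriction\tau_0=(\mm_2\restriction\tau_0)^I/D$, so $\pi$ is an isomorphism between the $\tau_0$-reducts of $\mm_1^I/D$ and $\mm_2^I/D$. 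As every factor of the power $\mm_1^I/D$ equals $\mm_1$, for each $\phi\in T_1$ the index set $\{i\in I:\mm_1\models\phi\}$ is all of $I\in D$, so Theorem~\ref{Los} gives $\mm_1^I/D\models\phi$; thus $\mm_1^I/D\models T_1$, and symmetrically $\mm_2^I/D\models T_2$. Finally I transport the interpretations of the symbols in $\tau_2\setminus\tau_0$ from $\mm_2^I/D$ onto the universe of $\mm_1^I/D$ along $\pi$; since $\pi$ preserves $\tau_0$ this is well defined and yields a single $(\tau_1\cup\tau_2)$-structure $\mm$ with $\mm\restriction\tau_1=\mm_1^I/D\models T_1$ and $\mm\restriction\tau_2\cong\mm_2^I/D\models T_2$. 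Because $L^d_A$ is closed under isomorphism, $\mm\restriction\tau_2\models T_2$, whence $\mm\models T_1\cup T_2$, so $T_1\cup T_2$ is consistent.

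The place where the absence of negation matters, and the main obstacle to overcome, is that the classical Henkin-style proof of Robinson consistency---building a complete separating theory by repeatedly adjoining a sentence or its negation---is simply unavailable here. The ultrapower route sidesteps this because establishing consistency only requires exhibiting one model, and the single implication provided by Theorem~\ref{Los} is exactly strong enough to carry the (possibly second order) theories $T_1$ and $T_2$ through the \emph{constant} ultrapowers $\mm_1^I/D$ and $\mm_2^I/D$. What must be arranged with care is that the Keisler--Shelah characterization be run on the common first order reducts with a single ultrafilter, so that one isomorphism $\pi$ suffices to glue the two powers; the second order content of $T_1$ and $T_2$ is then reconciled with the merely first order elementary equivalence of the reducts by separating the two tasks, the first order matching being carried out by $\pi$ and the second order transfer through the ultraproduct via Theorem~\ref{Los}.
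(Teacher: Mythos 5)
Your proposal is correct and takes essentially the same route as the paper's own proof: establish first order elementary equivalence of the $\tau_1\cap\tau_2$-reducts from the completeness hypothesis, apply the Keisler--Shelah ultrapower theorem \cite{MR0297554} with a single index set and ultrafilter to obtain isomorphic reducts of the two ultrapowers, and use the one-directional \L o\'s Lemma (Theorem~\ref{Los}) to carry $T_1$ and $T_2$ through the ultrapowers before amalgamating into a common $(\tau_1\cup\tau_2)$-structure. The only difference is that you spell out steps the paper compresses into ``w.l.o.g.'' (transporting the extra symbols along the isomorphism and appealing to closure under isomorphism), which is harmless.
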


\begin{proof}This proof is not specific to $L^d_A$, but is rather a well-known consequence of \L o\'s Lemma, Theorem~\ref{Los}.
Let $\cM_1\models T_1$ and $\cM_2\models T_2$. Let $\cM_l^-$ be the reduct of $\cM_l$ to the vocabulary $\tau_d=\tau_1\cap\tau_2$. Now $\cM_1^-$ and $\cM_2^-$ are elementarily equivalent in first order logic, for if $\cM_1^-\models\phi$ then necessarily $T_1\cap T_2\models\phi$, whence $\cM_2^-\models\phi$, and vice versa. By \cite{MR0297554} there are a set $I$ and an ultrafilter $D$ on $I$ such that if we denote $\cM_1^I/D$ by $\cN_1$ and $\cM_2^I/D$ by $\cN_2$, then $\cN_1\restriction \tau_d\cong \cN_2\restriction\tau_d$. W.l.o.g. $\cN_1\restriction \tau_d= \cN_2\restriction\tau_d$. Let $\cN$ be a common expansion of $\cN_1$ and $\cN_2$. By Theorem~\ref{Los},  $\cN\restriction\tau_1\models T_1$ and $\cN\restriction\tau_2\models T_2$. Hence, $\cN\models T_1\cup T_2$.
\end{proof}

\section{The Downward L\"owenheim-Skolem property}

The Downward L\"owenheim-Skolem Property, which says that any sentence (of the logic) which has a model has a countable model, is an important ingredient of the Lindstr\"om characterization of first order logic. The main examples of logics with this property, apart from first order logic, are  $L_{\omega_1\omega}$ and its sublogics $L(Q_0)$ (with the quantifier ``there exists infinitely many", see e.g. \cite[p. 8]{zbMATH03941493}) and the weak second order logic $L^2_w$ (with quantifiers for variables that range over finite sets, see e.g. \cite[p. 9]{zbMATH03941493}). We now prove this property for $L^d_A$ in a particularly strong form.
 
Because of lack of negation the elementary submodel relation $\mm\preccurlyeq\mn$ splits into two different concepts $\mm\preccurlyeq^+\mn$ and $\mm\preccurlyeq^-\mn$:

\begin{definition}
Let us write $\mn\preccurlyeq^-_{L^d_A}\mm$ if $\mn\subseteq \mm$ and for all $a_1,\ldots,a_n$ in $N$ and all formulas $\phi(x_1,\ldots,x_n)$ of $L^d_A$ we have
$$\mm\models\phi(a_1,\ldots,a_n)\Rightarrow\mn\models\phi(a_1,\ldots,a_n).$$
Respectively, we write $\mn\preccurlyeq^+_{L^d_A}\mm$ if $\mn\subseteq \mm$ and for all $a_1,\ldots,a_n$ in $N$ and all formulas $\phi(x_1,\ldots,x_n)$ of $L^d_A$ we have
$$\mn\models\phi(a_1,\ldots,a_n)\Rightarrow\mm\models\phi(a_1,\ldots,a_n).$$
Similar definitions can be given for $L^{d,\omega}_A$ and $L^{d,\infty}_A$, and for fragments (i.e. subsets closed under subformulas) $\Gamma$ thereof.
\end{definition}

The Compactness Theorem implies that every infinite model $\mn$ has arbitrarily large $\mm$ such that $\mn\preccurlyeq^+_{L^d_A}\mm$ (Corollary~\ref{wpp}).

\begin{theorem}[Downward L\"owenheim-Skolem-Tarski Theorem]\label{ls}
Sup\-pose $\kappa\ge\aleph_0$, $A\subseteq 2^\omega$, $\mm$ is a model for a vocabulary of cardinality $\le\kappa$, and $X\subseteq M$ such that $|X|\le\kappa$. Then there is $\mn\preccurlyeq^-_{L^d_A}\mm$ (even $\mn\preccurlyeq^-_{\Gamma}\mm$ for any fixed  fragment $\Gamma$ of $L^{d,\infty}_A$ of size $\le\kappa$) such that $X\subseteq N$ and $|N|=\kappa$.
\end{theorem}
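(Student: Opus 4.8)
The plan is to build $\mn$ as an induced substructure $\mm\restriction N$, where $N$ is obtained from $X$ by a Skolem-type closure carried out along an increasing $\omega$-chain $X\subseteq N_0\subseteq N_1\subseteq\cdots$ with $N=\bigcup_m N_m$, and then to verify $\mn\preccurlyeq^-_\Gamma\mm$ by induction on the complexity of the formulas of $\Gamma$. Since $\Gamma$ is closed under subformulas, every subformula met in the induction is again in $\Gamma$, so the induction hypothesis is always available. The cases that require no new elements are automatic in the $\preccurlyeq^-$ direction: for atomic and negated atomic formulas truth is absolute between a structure and a substructure; for $\wedge$ (even infinite conjunctions) and for $\vee$ one pushes $\mm\models\Rightarrow\mn\models$ through the conjuncts, respectively the true disjunct, by the induction hypothesis; and for $\forall x\,\phi$ one uses that if $\mm$ satisfies it for all elements of $M$ then in particular for all elements of $N$, whence $\mn\models\forall x\,\phi$. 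Thus only $\exists x$, $\exists R$ and $Q_A$ force witnesses into $N$.

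First I would fix the closure conditions on $N$. We require $N$ to contain the constants and to be closed under the functions of the vocabulary, so that $\mm\restriction N$ is a genuine substructure. For each subformula $\exists y\,\theta(\bar u,y)\in\Gamma$ we add an element Skolem function $g_\theta$ with $\mm\models\theta(\bar a,g_\theta(\bar a))$ whenever $\mm\models\exists y\,\theta(\bar a,y)$. For each subformula $\exists R\,\theta(R,\bar u)\in\Gamma$ we fix a choice $R^\ast_{\bar a}\subseteq M^m$ with $(\mm,R^\ast_{\bar a})\models\theta(\bar a)$ whenever $\mm\models\exists R\,\theta(\bar a)$, and we additionally close $N$ under the element Skolem functions of the expanded structure $(\mm,R^\ast_{\bar a})$ attached to the subformulas of $\theta$; since the quantifier nesting of a single formula is finite (formulas are well-founded trees) this inner recursion terminates, and since $|\Gamma|\le\kappa$ bounds the number of subformulas, there are at most $\kappa$ such finitary Skolem functions in all. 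The $Q_A$-conditions are described below. As each closure step adds at most $\kappa$ elements and we iterate $\omega$ times, $|N|\le\kappa$; starting the construction from a set $X'\supseteq X$ of size exactly $\kappa$ (possible when $\kappa\le|M|$, the only nontrivial case) yields $|N|=\kappa$.

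The heart of the argument is the step for $Q_A$, which is where the reformulation (\ref{QQ}) is used. Suppose $\theta=(Q_Ax_0x_1)(\bar\psi)\in\Gamma$ and $\mm\models\theta(\bar a)$ with $\bar a\in N$. By (\ref{QQ}), for every $\sigma\in 2^{<\omega}$ we may choose a branch $\eta_\sigma\in 2^\omega\setminus A$ extending $\sigma$, an element $a_\sigma\in M$, and witnesses $b^n_{\sigma,0},\dots,b^n_{\sigma,n-1}\in M$ such that $\mm\models\Gamma^{n,k}_{\bar\psi,\eta_\sigma\restriction n}(b^n_{\sigma,0},\dots,b^n_{\sigma,n-1},a_\sigma,\bar a)$ for all $n<\omega$. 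The corresponding $Q_A$-closure condition puts all these elements $a_\sigma$ and $b^n_{\sigma,j}$ into $N$; as $2^{<\omega}$ is countable, only countably many per instance are added. Now in $\mn$ we verify (\ref{QQ}) for $\theta(\bar a)$ by re-using, for each $\sigma$, the very same branch $\eta_\sigma$: each $\Gamma^{n,k}_{\bar\psi,\eta_\sigma\restriction n}$ is a finite conjunction of instances of the components $\psi_i$ of $\bar\psi$, which are subformulas of $\theta$ and hence in $\Gamma$, so the induction hypothesis gives $\mn\models\Gamma^{n,k}_{\bar\psi,\eta_\sigma\restriction n}(\dots)$ for all $n$. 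Since $\eta_\sigma\in 2^\omega\setminus A$ extends $\sigma$ and lies in $\Omega(\mn_{\bar\psi})$, this shows $\Omega(\mn_{\bar\psi})\setminus A$ is dense, i.e. $\mn\models\theta(\bar a)$. Crucially, no hypothesis on $A$ is needed: the set $A$ is the same in both models, and the branches witnessing density are inherited verbatim from $\mm$.

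For $\exists R\,\theta(R,\bar a)$ with $\bar a\in N$ I would take the fixed witness $R^\ast_{\bar a}$ and interpret $R$ in $\mn$ as $R^\ast_{\bar a}\cap N^m$; atomic $R$-formulas are then absolute between $(\mn,R^\ast_{\bar a}\cap N^m)$ and $(\mm,R^\ast_{\bar a})$, and closure under the inner Skolem functions of $(\mm,R^\ast_{\bar a})$ guarantees $(\mn,R^\ast_{\bar a}\cap N^m)\preccurlyeq^-(\mm,R^\ast_{\bar a})$ for the subformulas of $\theta$, so $(\mn,R^\ast_{\bar a}\cap N^m)\models\theta(\bar a)$ and hence $\mn\models\exists R\,\theta(\bar a)$. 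The main obstacle I anticipate is the bookkeeping for nested $\exists R$: the inner Skolem functions live in structures $(\mm,R^\ast_{\bar a})$ depending on parameters $\bar a$ that only enter $N$ as the chain grows, so one must interleave the generation of these parameter-indexed function families with the chain construction and check that the recursion, though parameter-wide, stays finite in depth and contributes only $\kappa$ elements per stage. The conceptually new point, by contrast, is the $Q_A$ case, and it is comparatively clean once $Q_A$-truth is read through (\ref{QQ}) and one observes that density is witnessed by branches that transfer directly from $\mm$ to $\mn$.
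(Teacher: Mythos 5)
Your proof is correct, and its core is the same as the paper's: Skolem-type witnesses for $\exists$ and $\exists R$, and for $Q_A$ the same key observation --- read the quantifier through $(\ref{QQ})$, pull the representing element $a$ and the finitely many tree-witnesses $b^n_j$ into $N$, and reuse the branches $\eta\in 2^\omega\setminus A$ verbatim, so that (as you note) no hypothesis on $A$ is needed. Where you differ is the implementation of the closure, and the comparison is instructive. The paper avoids your $\omega$-chain and your parameter-indexed families $R^*_{\bar a}$ altogether: for each formula $\exists R\,\phi(R,\bz)$, with $\bz$ of length $k$ and $R$ of arity $n$, it adds a \emph{single} predicate $R^*$ of arity $k+n$ whose sections $R^*(\bc,\cdot)$ are the chosen witnessing relations, together with ordinary Skolem functions $f_\phi$; it also fixes one global function $g$ selecting the $Q_A$-witnesses; and then it takes $N=M\cap K$ for a single elementary submodel $K\prec H_\theta$ of size $\kappa$ containing $g$, $A$, $\mm$, $X$ and the expanded vocabulary. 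Elementarity of $K$ in $H_\theta$ then performs in one stroke all the bookkeeping you do by hand: $N$ is closed under the Skolem functions; restricting the expanded structure to $N$ automatically restricts every section of every $R^*$, which dissolves exactly the nested-$\exists R$ obstacle you flag as your main difficulty (there are no parameter-indexed families, just relations of higher arity); and in the $Q_A$ step the branch $\eta$ and the witnesses $a, b^n_j$ are found inside $K$ because $(\ref{QQ})$ is a statement whose parameters lie in $K$. Your hand-built chain is more elementary in that it never invokes $H_\theta$, and it does work, but to be complete it must carry out the interleaved recursion for nested $\exists R$ that you only sketch; adopting the higher-arity $R^*$ device would make that recursion, and most of your bookkeeping, disappear.
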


\begin{proof}
We first expand $\mm$ as follows: For every $L^d_A$-formula $\phi(R,\bz)$, where $R$ is $n$-ary and $\bz=z_0,\ldots,z_{k-1}$, we make sure there  is a predicate symbol $R^*$ of arity $k+n$ such that if $\mm\models \exists R\phi(R,\bc)$, then $\mm\models\phi(R^*(\bc,\cdot),\bc)$. Likewise, we may assume the vocabulary of $\mm$ has a Skolem function $f_\phi$ for each formula $\phi(x,\bz)$ such that if $\mm\models\exists x\phi(x,\bc)$, then $\mm\models\phi(f_\phi(\bc),\bc)$. Let $\tau$ be the original vocabulary of $\mm$ and $\tau^*$ the vocabulary of the expansion, which we denote $\mm^*$. For any formulas $\bar \psi$ in $L^d_A$ of the vocabulary $\tau^*$ and $\bar c\in M^k$ such that (\ref{Q}) 
in Definition~\ref{123} holds, let  
$g(n,k,\bar \psi, \eta,\bar c)$ be the function which maps $n,k,{\bar \psi},\bar c$ and $\eta\in 2^n$ to $\langle b^n_0,\ldots,b^n_{n-1}\rangle\in M^n$
such that $\mm^*\models\Gamma^{n,k}_{\bar \psi,\eta\restriction n}(b^n_0,\ldots,b^n_{n-1},a,\bc)$ for all $n<\omega$. 
Denoting for any cardinal $\theta$ the set of sets of hereditary cardinality $<\theta$ by $H_\theta$, let $\theta\ge(\kappa+2^\omega)^+$such that $M\subseteq H_\theta$, and $K\prec H_\theta$, such that  $|K|=\kappa$ and $\{A,\kappa,\tau,\mm^*, X,g\}\cup \kappa\cup\tau^*\cup X\subseteq K$. Let $\mn$ be the restriction of $\mm^*$ to $K$, i.e. the universe $N$ of $\mn$ is $M\cap K$ and the constants, relations and functions of $\mm^*$ are relativized to $N$.

We need to check that $N$ is closed under the interpretations of function symbols of the vocabulary of $\mm^*$. Let $f$ be such a function symbol. Suppose $f$ is $s$-ary and $\bc\in N^s$. The sentence $\exists x(x\in M\wedge x=f^{\mm^*}(\bc))$ is true in $H_\theta$, hence true in $K$. Thus there is $b\in N (=M\cap K)$ such that $b=f^{\mm^*}(\bc)$ is true in $K$. Therefore $f^{\mm^*}(\bc)=b\in N$. We can  conclude that $\mn$ is a substructure of $\mm^*$.

\medskip

\noindent{\bf Claim:\ } If $\phi(\vec{x})$ is a $\tau$-formula in $L^d_A$ and $\vec{a}\in N$, then $\mm^*\models\phi(\vec{a})\Rightarrow\mn\models\phi(\vec{a})$. 
\medskip

We use induction on $\phi$. The claim follows from $\mn\subseteq\mm^*$  for  atomic and  negated atomic $\phi$. The claim is clearly preserved under conjunction and disjunction. It is also trivially preserved under universal quantifier, since $\mn\subseteq\mm^*$. The induction steps for both first and second  order existential quantifiers are trivial because of the expansion we have performed on $\mm^*$. We are left with the quantifier $Q_A$.

Suppose $\mm^*$ satisfies (\ref{Q}) of Definition~\ref{123} with $\bc\in N^k$. Thus (\ref{QQ}) holds and we want to prove (\ref{QQ}) with $\mm^*$ replaced by $\mn$. Note that (\ref{QQ}) also holds in $K$. Suppose $\sigma\in 2^n$ is given. There is $\eta\in K\cap (2^\omega\setminus A)$ extending $\sigma$ such that  $K$ satisfies
$$
\begin{array}{l}
\mbox{There is $a\in M$ such that  for some function $n\mapsto \langle b^n_0,\ldots,b^n_{n-1}\rangle$}\\
\mbox{from $\omega$ to $M^n$ we have }\mm^*\models\Gamma^{n,k}_{\bar \psi,\eta\restriction n}(b^n_0,\ldots,b^n_{n-1},a,\bc)\mbox{ for all $n<\omega$}.
\end{array}
$$
Thus there are $a\in N$ and a function $n\mapsto \langle b^n_0,\ldots,b^n_{n-1}\rangle$
from $\omega$ to $N^n$ such that $\mm^*\models\Gamma^{n,k}_{\bar \psi,\eta\restriction n}(b^n_0,\ldots,b^n_{n-1},a,\bc)$ for all $n<\omega$. By the Induction Hypothesis, 
$\mn\models\Gamma^{n,k}_{\bar \psi,\eta\restriction n}(b^n_0,\ldots,b^n_{n-1},a,\bc)$ for all $n<\omega$ follows. \end{proof}

We  conclude that 
every sentence of $L^d_A$ which has an infinite model has a countable model and an uncountable model.

The following examples show that Theorem~\ref{ls} is in a sense optimal:

\begin{example}
There is an uncountable model $\mm$, namely $(\P(\omega),a,\in)$, where $a$ is the element $\omega$ of $\P(\omega)$, such that there is no countable model $\mn$ with 
$\mn\preccurlyeq^+_{\Sigma^1_1}\mm$. There is a countable model $\mn$, namely $(\omega,<)$, such that there is no uncountable model $\mm$ with 
$\mn\preccurlyeq^-_{\Sigma^1_1}\mm$. 
\end{example}

\section{Proper extensions of $\Sigma^1_1$ and $\Sigma^1_{1,\delta}$}\label{s6}

Our goal in this section is to show that for many $A\subseteq 2^\omega$ the logic $L^d_A$ properly extends $\Sigma^1_1$ and $L^{d,\omega}_A$ properly extends $\Sigma^1_{1,\delta}$. We have a spectrum of results to this effect but nothing as conclusive as being able to explicitly point out such a set $A$. There are obvious reasons for this. The logic $\Sigma^1_1$ is very powerful and any ``simple" $A\subseteq 2^\omega$ is likely to yield $L^d_A$ which is  equivalent to $\Sigma^1_1$ rather than properly extending it. This is even more true with $L^{d,\omega}_A$ and $\Sigma^1_{1,\delta}$.

We first establish the basic existence  of sets $A\subset 2^\omega$ with the desired properties. We shall then refine the result with further arguments.

\begin{theorem}\label{nl}
There are sets $A\subseteq 2^\omega$ such that $2^\omega\setminus A$ is dense and $Q_A$ is not definable in $\Sigma^1_1$, nor in $\Sigma^1_{1,\delta}$, nor in $L_{\omega_1\omega_1}$.
\end{theorem}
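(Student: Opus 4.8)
The plan is to prove Theorem~\ref{nl} by a pure cardinality argument: there are only $2^{\aleph_0}$ sentences in the three target logics over the fixed vocabulary $\tau_d$, whereas I will exhibit $2^{2^{\aleph_0}}$ essentially different sets $A$, so most of them must escape definability. First I would reduce non-definability of the quantifier $Q_A$ to non-definability of a single $\tau_d$-model class. If $Q_A$ were definable in a logic $L$, then in particular the sentence $\psi_A$ of Example~\ref{e}, which defines $K_A=\{\mm:\Omega(\mm)\setminus A\text{ is dense}\}$, would be equivalent to an $L$-sentence; hence it suffices to find $A$ with $2^\omega\setminus A$ dense for which $K_A$ is \emph{not} $L$-definable, where $L$ is the union of the three classes of $\tau_d$-sentences. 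A routine syntactic count (finite vocabulary, countable conjunctions and countable quantifier strings) gives $|L|\le 2^{\aleph_0}$, so there are at most $2^{\aleph_0}$ classes $\mathrm{Mod}(\theta)$, $\theta\in L$.

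Next I would restrict attention to the canonical models $\mm_B$ ($B\subseteq 2^\omega$) of Example~\ref{ee}. A direct unwinding of $\psi_\eta$ shows $\Omega(\mm_B)=B$, so $\mm_B\in K_A\iff B\setminus A$ is dense. Thus each candidate $\theta$ determines a set $T_\theta:=\{B:\mm_B\models\theta\}\subseteq\mathcal P(2^\omega)$, and if $K_A$ is $L$-definable by $\theta$ then the density-difference predicate $D_A:=\{B: B\setminus A\text{ is dense}\}$ equals $T_\theta$. Since there are at most $2^{\aleph_0}$ sets $T_\theta$, the whole theorem reduces to producing more than $2^{\aleph_0}$ distinct predicates $D_A$ among complement-dense $A$.

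The heart of the argument is this last step. I would fix a countable dense set $E\subseteq 2^\omega$, and, using that each basic clopen set $U_s=\{x:s\triangleleft x\}$ still has $2^{\aleph_0}$ points outside $E$, split $2^\omega\setminus E$ into $2^{\aleph_0}$ pairwise disjoint \emph{dense} sets $\{B_\alpha:\alpha<2^{\aleph_0}\}$ (process the $2^{\aleph_0}$ pairs $(s,\alpha)$ and greedily assign to $B_\alpha$ a previously unused real of $U_s\setminus E$; this is possible since fewer than $2^{\aleph_0}$ reals are used at any stage). For $S\subseteq 2^{\aleph_0}$ set $A_S:=\bigcup_{\alpha\notin S}B_\alpha$. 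Then $E\subseteq 2^\omega\setminus A_S$, so $2^\omega\setminus A_S$ is dense; and by pairwise disjointness $B_\alpha\setminus A_S=B_\alpha$ (dense) when $\alpha\in S$, whereas $B_\alpha\setminus A_S=\emptyset$ (not dense) when $\alpha\notin S$. Hence $B_\alpha\in D_{A_S}\iff\alpha\in S$, so $S\mapsto D_{A_S}$ is injective and the $D_{A_S}$ number $2^{2^{\aleph_0}}$. Comparing with the bound $2^{\aleph_0}$ on the sets $T_\theta$, all but at most $2^{\aleph_0}$ of the $A_S$ satisfy $D_{A_S}\neq T_\theta$ for every $\theta\in L$; for each such $A_S$ the class $K_{A_S}$, and therefore $Q_{A_S}$, is not definable in any of the three logics, while $2^\omega\setminus A_{S}$ is dense.

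The main obstacle I anticipate is the first reduction rather than the combinatorics: one must confirm that definability of the quantifier $Q_A$ genuinely entails $L$-definability of the class $K_A$ on \emph{all} $\tau_d$-models, so that evaluation on the canonical models $\mm_B$ is legitimate even though the target logics need not be closed under substitution. The construction of the $B_\alpha$ and the encoding are elementary; the only delicate point there is keeping every $A_S$ complement-dense, which the reserved set $E$ guarantees uniformly across all $S$.
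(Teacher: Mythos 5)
Your proof is correct and follows essentially the same route as the paper: both arguments build $2^{2^{\aleph_0}}$ sets as unions of pairwise disjoint dense subsets of $2^\omega$, distinguish the resulting classes $K_A$ by evaluating on the canonical models $\mm_B$ of Example~\ref{ee}, and conclude by comparing against the $2^{\aleph_0}$ bound on sentences of the three target logics. Your reserved countable dense set $E$ and the complement-indexing $A_S=\bigcup_{\alpha\notin S}B_\alpha$ are a minor (and slightly tidier) variation that makes the density of $2^\omega\setminus A_S$ uniform across all $S$, a point the paper's proof leaves implicit.
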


\begin{proof}
Let $A_\alpha$, $\alpha<2^\omega$, be disjoint dense subsets of $2^\omega$. For any $X\subseteq 2^\omega$, let $$A_X=\bigcup_{\alpha\in X}A_\alpha.$$ Note that if $X\ne Y$, then $A_X\setminus A_Y$ or $A_Y\setminus A_X$ is dense.
 Let $K_A$ and $K_B$ be as in Example~\ref{e} and  $\mm_A$ is as in Example~\ref{ee}.
If $A\setminus B$ is dense, then $K_A\ne K_B$, as $\mm_A\in K_B$ but $\mm_A\notin K_A$.
Thus the classes $K_{A_X}$, $X\subseteq 2^\omega$, are all different. For cardinality reasons there is $X\subseteq 2^\omega$ so that $K_{A_X}$ is not definable in $\Sigma^1_{1,\delta}$, nor in $L_{\omega_1\omega_1}$. But $K_{A_X}$ is always definable in $L^d_{A_X}$.
\end{proof}

The following result merely improves the previous  result:

\begin{theorem}\label{basic}
There is a countable $A_0\subseteq 2^\omega$ such that  if $A_0\subseteq A\subseteq 2^\omega$ with $2^\omega\setminus A$ dense, then $Q_A$ is not $\Sigma^1_1$-definable.
\end{theorem}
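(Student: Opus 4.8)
The plan is to reduce the statement to the non-$\Sigma^1_1$-definability of the single class $K_A$ of Example~\ref{e}, and then to diagonalize against the \emph{countably many} $\Sigma^1_1$-sentences over the fixed vocabulary $\tau_d$ rather than against the continuum of possible sets $A$.

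First I would observe that $Q_A$ is $\Sigma^1_1$-definable if and only if the class $K_A=\{\mm : \Omega(\mm)\setminus A \text{ is dense}\}$ is $\Sigma^1_1$-definable over $\tau_d$. One direction is immediate, since $K_A=\mathrm{Mod}(\psi_A)$; the converse holds because an application of $Q_A$ to a tuple $\bar\psi$ of formulas is just a test of membership of the FO-interpreted model $\mm_{\bar\psi}$ in $K_A$, and $\Sigma^1_1$ is closed under such relativizations. So it suffices to make $K_A$ non-$\Sigma^1_1$-definable for every admissible $A\supseteq A_0$ (here ``admissible'' means $2^\omega\setminus A$ dense). Since $\tau_d$ is finite, there are only countably many $\Sigma^1_1$-sentences over $\tau_d$ (each uses finitely many second-order variables and a first-order matrix); fix an enumeration $\langle\theta_n:n<\omega\rangle$ of them.

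Next I would construct $A_0$ by diagonalization. For each $n$, ask whether $\mathrm{Mod}(\theta_n)=K_{A'}$ for some admissible $A'$. If so, fix one such $A^{(n)}$ and choose a \emph{countable dense} set $C_n\subseteq 2^\omega\setminus A^{(n)}$ (possible, as $2^\omega\setminus A^{(n)}$ meets every basic clopen set, so selecting one witness per $s\in 2^{<\omega}$ gives a countable dense subset); if not, put $C_n=\emptyset$. Let $A_0=\bigcup_n C_n$, which is countable. The essential point is that $C_n$ depends only on the fixed sentence $\theta_n$ and not on the eventual $A$, so there is no circularity.

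The verification is the heart of the matter. Suppose toward a contradiction that some admissible $A\supseteq A_0$ has $K_A=\mathrm{Mod}(\theta_n)$. Then $\theta_n$ defines an admissible $K_A$, so we are in the first case for $n$ and $K_{A^{(n)}}=\mathrm{Mod}(\theta_n)$. Consider the canonical model $\mm_{C_n}$ of Example~\ref{ee}. By Example~\ref{e}, $\mm_{C_n}\in K_{A^{(n)}}$ iff $C_n\setminus A^{(n)}$ is dense; but $C_n\subseteq 2^\omega\setminus A^{(n)}$ is dense, so $C_n\setminus A^{(n)}=C_n$ is dense, whence $\mm_{C_n}\in K_{A^{(n)}}=\mathrm{Mod}(\theta_n)$, i.e.\ $\mm_{C_n}\models\theta_n$. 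Therefore $\mm_{C_n}\in K_A$, and Example~\ref{e} again gives that $C_n\setminus A$ is dense. This is absurd, since $C_n\subseteq A_0\subseteq A$ forces $C_n\setminus A=\emptyset$. Hence no admissible $A\supseteq A_0$ has $K_A$, and so $Q_A$, $\Sigma^1_1$-definable.

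I expect the only genuinely delicate points to be the reduction in the first step (the closure of $\Sigma^1_1$ under the relativization needed to pass between ``$Q_A$ definable'' and ``$K_A$ definable'') and the conceptual move in the second step: one must diagonalize against the countably many candidate defining sentences $\theta_n$, not against sets $A$, and realize that depositing a countable dense subset of each $2^\omega\setminus A^{(n)}$ into $A_0$ simultaneously destroys the canonical witness $\mm_{C_n}$ for \emph{every} admissible $A\supseteq A_0$ at once. Everything else is bookkeeping supported by Examples~\ref{ee} and~\ref{e}.
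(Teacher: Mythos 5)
Your proof is correct, and it takes a genuinely different route from the paper's own argument. Both proofs reduce the claim to the non-$\Sigma^1_1$-definability of $K_A=\mathrm{Mod}(\psi_A)$, and both end with the same terminal contradiction --- a countable structure all of whose represented reals lie inside $A$ would have to satisfy $\psi_A$, forcing the empty set to be dense --- but they reach that point differently. The paper constructs $A_0$ model-theoretically: it takes the canonical model $\mm_B$ with $B=2^\omega$, a resplendent $\mm$ with $\mm_B\prec\mm$, an expansion $\mm^+$ in which every true $\Sigma^1_1$-sentence has a witness named in a countable vocabulary, and a countable $\mn^+\prec\mm^+$, and sets $A_0=\Omega(\mn^+)$; a purported $\Sigma^1_1$-definition $\phi$ of $\psi_A$ then travels from $\mm_B$ to $\mm$ (by resplendency) and down to $\mn^+$ (by witnessing plus elementarity), giving the contradiction. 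You instead exploit explicitly that there are only countably many $\Sigma^1_1$-sentences over the finite vocabulary $\tau_d$: for each $\theta_n$ defining some $K_{A'}$ with $2^\omega\setminus A'$ dense, you deposit into $A_0$ a countable dense $C_n\subseteq 2^\omega\setminus A'$, and then the single canonical model $\mm_{C_n}$ of Example~\ref{ee} kills $\theta_n$ as a definition of $K_A$ for every $A\supseteq A_0$ with $2^\omega\setminus A$ dense, by Example~\ref{e}. Your route is more elementary --- no resplendency, no Skolem-witness expansions, no elementary submodels, just an enumeration and choice --- whereas the paper's route produces $A_0$ uniformly as the trace of one countable structure, with no case distinction over which sentences happen to define classes of the form $K_{A'}$, and exercises the $H_\theta$/elementary-submodel technology that recurs throughout the paper (Theorems~\ref{ls}, \ref{lsk}, \ref{wp}). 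One caveat on your opening reduction: the converse half of the claimed equivalence (definability of $K_A$ implies definability of the quantifier $Q_A$) is delicate, since relativizing a $\Sigma^1_1$-definition through argument formulas $\bar\psi$ that are themselves $\Sigma^1_1$ raises positivity issues for the occurrences of the substituted formulas; but your proof uses only the immediate direction, that non-definability of $K_A=\mathrm{Mod}(\psi_A)$ yields non-definability of $Q_A$, so correctness is unaffected.
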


\begin{proof}
Let us consider $\mm_{B}$, where $B=2^\omega$ (defined in Example~\ref{ee}). Let $\mm$ be resplendent (see \cite{MR403952}) such that $\mm_B\prec\mm$. Let $\mm^+$ be an expansion of $\mm$ such that every $\Sigma^1_1$-sentence true in $\mm^+$ has a witness in the vocabulary (countable). Let $\mn^+\prec\mm^+$ be countable. Let $A_0$ be the countable set  $\Omega(\mn^+)$. Suppose now
$A_0\subseteq A\subseteq 2^\omega$, but  $\psi_A$, as defined in Example~\ref{e}, is definable by a $\Sigma^1_1$-sentence $\phi$.  Since $2^\omega\setminus A$ dense, $\mm_B\models\psi_A$, whence $\mm_B\models\phi$. Hence all the first order consequences of $\phi$ are true in $\mm_B$. Since $\mm$ is resplendent, $\mm\models\phi$. Since $\phi$ has a witness in $\mm^+$, $\mn^+\models\phi$. Hence  $\mn^+\models\psi_A$ whence $\Omega(\mn^+)\setminus A$ is dense.
This is a contradiction, as $\Omega(\mn^+)\setminus A=\emptyset$.
\end{proof}

\begin{theorem}
 Let $\bP$ be the poset of finite partial functions $(\omega_1+\omega_1)\times \omega\to 2$ i.e. the forcing for adding $\omega_1+\omega_1$ Cohen reals. Let $G$ be $\bP$-generic and $\eta_\alpha\in 2^\omega$, $\alpha<\omega_1+\omega_1$, the Cohen reals added by $G$. Let $A$ be the set of $\eta$ such that $\eta=\eta_\alpha \mbox{ (mod finite)}$ for some $\alpha<\omega_1$. Then in $V[G]$, $Q_A$ is not $\Sigma^1_1$-definable.
\end{theorem}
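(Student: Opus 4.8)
The plan is to exploit the homogeneity of the Cohen forcing $\bP$ under coordinate permutations, together with the fact that any $\Sigma^1_1$ definition of $Q_A$ is a \emph{parameter-free, finitary} object, hence lies in the ground model $V$. Suppose toward a contradiction that in $V[G]$ the quantifier $Q_A$ is $\Sigma^1_1$-definable. It suffices to refute definability of the single class $K_A$ of Example~\ref{e}, since $\psi_A$ is an instance of $Q_A$ applied to atomic formulas; so assume $\phi$ is a $\Sigma^1_1$-sentence with $\mm\models\phi\iff\mm\in K_A$ for every $\tau_d$-model $\mm\in V[G]$. As $\phi$ is a finite object over the fixed vocabulary $\tau_d$ with no real parameters, we have $\phi\in V$, and $\phi$ is fixed by every automorphism of $\bP$ induced by a coordinate permutation.

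First I would fix a name $\dot A\in V$ for $A$ and a condition $p\in G$ forcing ``$\phi$ defines $K_{\dot A}$''. Let $F\subseteq\omega_1+\omega_1$ be the (finite) support of $p$. Since $F$ is finite while both blocks $[0,\omega_1)$ and $[\omega_1,\omega_1+\omega_1)$ are uncountable, I can choose $\gamma\in[0,\omega_1)\setminus F$ and $\delta\in[\omega_1,\omega_1+\omega_1)\setminus F$, and let $\rho$ be the transposition of $\omega_1+\omega_1$ swapping $\gamma$ and $\delta$ and fixing everything else, with $\sigma$ the induced automorphism of $\bP$. Because $\rho$ fixes $F$ pointwise, $\sigma(p)=p$, and because $\phi\in V$, $\sigma(\phi)=\phi$; moreover $\sigma(\dot A)$ names the set $A''=\bigcup_{\beta\in S}[\eta_\beta]_{=^{*}}$, where $S=([0,\omega_1)\setminus\{\gamma\})\cup\{\delta\}$ and $[\cdot]_{=^{*}}$ denotes the modulo-finite class. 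By the Symmetry Lemma, $p\Vdash$ ``$\phi$ defines $K_{\sigma(\dot A)}$'', so, as $p\in G$, in $V[G]$ the \emph{same} sentence $\phi$ defines both $K_A$ and $K_{A''}$.

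The contradiction then comes from $K_A\ne K_{A''}$, which I would obtain exactly as in the proof of Theorem~\ref{nl}: if $A''\setminus A$ is dense, then the canonical model $\mm_{A''}$ of Example~\ref{ee}, for which $\Omega(\mm_{A''})=A''$, satisfies $\mm_{A''}\in K_A$ (since $A''\setminus A$ is dense) but $\mm_{A''}\notin K_{A''}$ (since $A''\setminus A''=\emptyset$). Then $\phi$ would have to be simultaneously true and false in $\mm_{A''}$. To see that $A''\setminus A$ is dense, note that $\delta$ lies in the second block, so by mutual genericity the Cohen real $\eta_\delta$ differs infinitely often from every $\eta_\alpha$ with $\alpha<\omega_1$; hence $\eta_\delta\ne^{*}\eta_\alpha$ for all $\alpha<\omega_1$, so its modulo-finite class $[\eta_\delta]_{=^{*}}$ is disjoint from $A$ while being contained in $A''$ (as $\delta\in S$). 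Since every modulo-finite class is dense, $A''\setminus A\supseteq[\eta_\delta]_{=^{*}}$ is dense.

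The main obstacle—and the step deserving the most care—is the transfer via the automorphism: one must (i) justify that the witnessing $\Sigma^1_1$-sentence is genuinely parameter-free and therefore in $V$, so that $\sigma$ fixes it; and (ii) arrange $\rho$ to fix the finite support of the chosen $p$ while still moving the first block off itself, which is precisely what lets the Symmetry Lemma conclude that $p$ forces the analogous statement about $\sigma(\dot A)$. The remaining genericity facts—that distinct product-Cohen reals differ infinitely often, whence the two blocks have disjoint modulo-finite saturations and each modulo-finite class is dense—are standard; the same facts also show $2^\omega\setminus A$ is dense (using any second-block real), placing $L^d_A$ within the scope of Theorems~\ref{Los} and~\ref{ls}.
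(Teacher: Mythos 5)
Your proof is correct, and it rests on the same two pillars as the paper's: a putative $\Sigma^1_1$ definition $\phi$ is a finite, parameter-free syntactic object and hence lies in $V$, and the poset $\bP$ for adding $\omega_1+\omega_1$ Cohen reals has enough symmetry to re-index the reals while fixing $\phi$. The implementations, however, genuinely differ. The paper moves the \emph{model}: it takes $B$ to be the mod-finite closure of all $\omega_1+\omega_1$ Cohen reals, notes $\mm_B\models\psi_A$ and hence $\mm_B\models\phi$, and transfers the term for $\mm_B$ along the complete embedding $\bar f$ of $\bP$ into itself induced by a bijection $f\colon\omega_1+\omega_1\to\omega_1$ in $V$; the image $N$ represents exactly $A$, so $N\models\phi$ together with $\Omega(N)\setminus A=\emptyset$ gives the contradiction. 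You instead move the \emph{set}: a transposition chosen to avoid the support of the master condition $p$ fixes both $p$ and $\check\phi$ but sends $\dot A$ to a name for $A''$, so the Symmetry Lemma makes the single sentence $\phi$ define both $K_A$ and $K_{A''}$, classes which the canonical model $\mm_{A''}$ of Example~\ref{ee} separates. Your route makes explicit two points the paper leaves implicit: that ``$\phi$ defines $K_{\dot A}$'' is a priori only forced by a condition rather than by $1$ (you handle this by support-avoidance, whereas the paper needs a homogeneity argument to promote it to Boolean value one), and why truth of $\phi$ transfers (automorphisms preserve the forcing relation exactly, so you never invoke upward absoluteness of $\Sigma^1_1$ between the subextension $V[\bar f^{-1}G]$ and $V[G]$, which the paper's embedding argument tacitly uses). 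The paper's route buys brevity: one transfer, one model, no bookkeeping of conditions. One small refinement to yours: when you ``fix a name $\dot A\in V$ for $A$'' and then compute $\sigma(\dot A)$, you should either take $\dot A$ to be the canonical name built from the canonical names of the Cohen reals with indices below $\omega_1$, or include in what $p$ forces the statement that $\dot A$ equals the mod-finite closure of those reals; for an arbitrary name evaluating to $A$ at $G$, the identity $\sigma(\dot A)[G]=A''$ can fail. With that adjustment your argument is complete.
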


\begin{proof}
 Let $B$ be the set of $\eta$ such that $\eta=\eta_\alpha (\mbox{mod finite})$ for some $\alpha<\omega_1+\omega_1$. Then $\mm_B\models\psi_A$. Suppose $\phi$ is a $\Sigma^1_1$-sentence logically equivalent to $\psi_A$. Thus $\mm_B\models\phi$. Let $f$ be a bijection (in $V$) of $\omega_1+\omega_1$ onto $\omega_1$. The function $f$ induces an complete embedding $\bar f$ of $\bP$ into $\bP$.  The mapping $\bar f$ induces a mapping $\tau\mapsto\tau_{\bar f}$ between $\bP$-terms. Let $\mn$ be the image of $\mm_B$ under this mapping. Now $\mn\not\models\psi_A$. However, $\mn\models\phi$, whence $\mn\models\psi_A$, a contradiction.
\end{proof}

%2.2(4) said: If $A$ is co-dense and $|A|\ge\aleph_2$, then $Q_A$ is not $\Sigma^1_1$-definable.

\medskip

\begin{theorem}\label{wp}
Assume $ A=2^\omega\setminus D$, where $D \subseteq 2^\omega $ is  dense, $\omega<|D|< 2 ^ {\aleph_0} $ and there is an open set $U$ such that $D\cap V$ is uncountable for every non-empty open $V\subseteq U$. Then the 
quantifier $ Q_ A $ is not $ \Sigma ^1_1 $-definable.

%$(*)_B$ says 1 or 2 or 3:
%\begin{enumerate}

%\item  $B$ is uncountable in every open set, equivalently in every cone.

%\item In some open set $B$ is uncountable in every open subset.

%\item  In the definition of the quantifier we replace dense 
%by ``dense in itself", and require non-emptiness, of course.
%[ it seem to me that this make no difference by please check]

%\end{enumerate}

\end{theorem}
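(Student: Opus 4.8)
The plan is to assume, toward a contradiction, that $Q_A$ is $\Sigma^1_1$-definable, i.e.\ that $\psi_A$ of Example~\ref{e} is logically equivalent to a $\Sigma^1_1$-sentence $\phi=\exists R\,\theta(R)$ with $\theta$ first order. The guiding idea is that such a $\phi$ would force the relation ``$B\cap D$ is dense'' to be \emph{analytic} in codes for $B$, and that this is incompatible with the hypotheses $\omega<|D|<2^{\aleph_0}$ and the everywhere-uncountability of $D$ on $U$, by the perfect set property for analytic sets. It is useful to note at the outset that the local hypothesis is genuinely needed: a Luzin-type set of size $\aleph_1<2^{\aleph_0}$ is dense and locally uncountable yet has no perfect subset, so local uncountability \emph{alone} cannot produce the contradiction; the analytic information coming from $\phi$ must do the lifting. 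Fix throughout a basic open $[\sigma]\subseteq U$, so $D\cap[\tau]$ is uncountable for every $\tau\triangleright\sigma$.

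First I would record the reduction to an analytic predicate. For a real $\beta$ coding a countable set $B_\beta\subseteq 2^\omega$, the canonical model $\mm_{B_\beta}$ of Example~\ref{ee} is obtained from $\beta$ in a Borel way, and by Example~\ref{e} we have $\mm_{B_\beta}\models\psi_A$ exactly when $B_\beta\cap D$ is dense. Since $\phi$ is a \emph{fixed} $\Sigma^1_1$-sentence that does not mention $D$, and $\mm_{B_\beta}$ is countable, the clause $\mm_{B_\beta}\models\phi$ is $\Sigma^1_1$ in $\beta$ (the second-order quantifier $\exists R$ becomes a real quantifier and the first-order matrix $\theta$ is arithmetic in the code); hence, under our assumption, $P=\{\beta: B_\beta\cap D\text{ is dense}\}$ is analytic. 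To localise, I would fix once and for all a countable $E\subseteq D$ with $E\cap[\sigma]=\emptyset$ which is dense in $2^\omega\setminus[\sigma]$ (possible because $D$ is dense), and restrict to $B_\beta=E\cup C_\beta$ with $C_\beta\subseteq[\sigma]$. A short check with the three positions of a basic cone relative to $[\sigma]$ shows that then $B_\beta\cap D$ is dense in $2^\omega$ iff $C_\beta\cap D$ is dense in $[\sigma]$. Thus $\mathrm{Dns}(C):\equiv$ ``$C\cap D$ is dense in $[\sigma]$'' is an analytic predicate of a code for the countable set $C\subseteq[\sigma]$.

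Next I would turn this analyticity against the size of $D$. Fix a Suslin (tree) representation of $\mathrm{Dns}$, so that $\mathrm{Dns}(C)$ holds iff an associated tree has an infinite branch, and fix one witnessing countable $C^\ast\subseteq D$ that is dense in $[\sigma]$ (such $C^\ast$ exists since $D$ is dense). The plan is a fusion along a perfect tree: at each node I would use the local uncountability of $D$ in the relevant sub-cone to choose two incompatible new points of $D$ for the two successors, while using the \emph{uniformity} of the Suslin witness to keep every finite approximation extendible to a full witness of $\mathrm{Dns}$. Carried through, this yields $2^{\aleph_0}$ branches, each producing a countable $C\subseteq D$ satisfying $\mathrm{Dns}$, with distinct branches contributing distinct elements of $D$. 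Hence $D$ has $2^{\aleph_0}$ elements, contradicting $|D|<2^{\aleph_0}$; the hypothesis $\omega<|D|$ guarantees the tree never collapses, and the everywhere-uncountability of $D$ on $U$ is exactly what allows each node to split \emph{inside} $D$.

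The step I expect to be the main obstacle is precisely this fusion. The difficulty is structural: density is a property of a countable set \emph{as a whole} and does not detect membership of a single real in $D$, so one cannot Borel-reduce ``$x\in D$'' to ``$\mm^x\models\phi$'' and read off $D\cap[\sigma]$ as an analytic set of points directly (a singleton is never dense). The real work is to thread the Suslin witness for the \emph{density} relation $\mathrm{Dns}$ through the construction so that the continuum-many branches are forced to consist of \emph{actual} members of $D$, rather than mere limits of $D$-points, which is where the everywhere-uncountability (as opposed to plain density) of $D$ on $U$ enters and where $|D|<2^{\aleph_0}$ is finally contradicted. An alternative, cleaner route worth attempting in place of a hand-built fusion is to massage $\mathrm{Dns}$ into a single uncountable analytic set contained in $D$ and then invoke the perfect set property as a black box; whether that packaging is available, or whether the explicit fusion is unavoidable, is the crux of the argument.
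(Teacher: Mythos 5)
Your opening reduction is fine as far as it goes: satisfaction of the fixed $\Sigma^1_1$ sentence $\phi$ in a coded countable model is indeed analytic in the code, and your localization to a cone $[\sigma]\subseteq U$ is correct. But that is the easy half, and the proposal stops exactly where the theorem's content begins. The fusion is never carried out, and as described it cannot work: a perfect tree of finite conditions pins down only countably many reals at its nodes, so the only points you can \emph{certify} to lie in $D$ (because you chose them there, using local uncountability) form a countable pool, while the reals that genuinely vary with the branch are limits of finite approximations, and nothing certifies their membership in $D$ --- the very obstruction you name and do not overcome. Threading a Suslin witness for $\mathrm{Dns}$ through the construction only guarantees that each branch code lies in $\mathrm{Dns}$; it does not prevent every set $C_x\cap D$ from being a subset of one fixed countable dense $D_0\subseteq D\cap[\sigma]$. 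Indeed $\mathrm{Dns}$ already contains codes of continuum many \emph{distinct} sets all contained in such a $D_0$ (any dense subset of $D_0$ qualifies), so a perfect set of codes in $\mathrm{Dns}$, which is all that the perfect set property or a naive fusion can deliver, yields no lower bound on $|D|$ whatsoever. Your alternative packaging --- an uncountable analytic set of \emph{points} contained in $D$ --- is likewise not produced by anything you have: as you yourself observe, density is unaffected by deleting a point, so $\mathrm{Dns}$ gives no analytic handle on membership of an individual real in $D$. What is missing is some mechanism forcing continuum many of the witnessing sets to be (almost) pairwise disjoint inside $D$, and that is the whole theorem.

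The paper supplies precisely this mechanism, and it is model-theoretic, not descriptive-set-theoretic; crucially, it uses the \emph{first-orderness} of the matrix $\phi_0$ (where $\phi=\exists R\,\phi_0$), not merely analyticity of $\mathrm{Dns}$. Using local uncountability one fixes pairwise disjoint countable dense sets $D_\alpha\subseteq D\cap U$ ($\alpha<\omega_1$), countable models $N_\alpha$ representing them, and expansions $N^*_\alpha\models\phi_0$; the whole sequence is put into a countable ${\mathfrak B}^*\prec({\mathcal H}(\chi),\in,<)$, and Theorem IV.5.19 of \cite{MR1083551} gives $2^{\aleph_0}$ countable elementary extensions ${\mathfrak B}_\eta$ of ${\mathfrak B}^*$ with standard $\omega$, each with an ordinal $c_\eta$, such that any real definable in two of them already lies in ${\mathfrak B}^*$. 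The ``nonstandard'' model $N^+_\eta=(N^*_{c_\eta})^{{\mathfrak B}_\eta}$ still \emph{genuinely} satisfies $\phi_0$, because satisfaction of a fixed first-order sentence transfers through elementarity in an $\omega$-standard model; hence its reduct satisfies $\phi$, hence --- by the assumed equivalence --- it satisfies $\psi_A$ in the real world, i.e.\ its represented reals meet $D$ in a dense set. Since every represented real is definable in ${\mathfrak B}_\eta$, the definability-disjointness clause converts these continuum many families into $2^{\aleph_0}$ distinct elements of $D$, contradicting $|D|<2^{\aleph_0}$. Note the inversion relative to your plan: instead of building sets inside $D$ and then verifying $\phi$, one cheaply builds models verifying $\phi$ and lets the assumed equivalence $\phi\equiv\psi_A$ manufacture the members of $D$; and the outcome is an injection of $2^\omega$ into $D$ by counting, not a perfect subset of $D$ --- which, as your own Luzin-set remark indicates, one should not expect to exist.
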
 

\begin{proof}
Recall that $\psi_A$ is a sentence of $L^d_A$ in the vocabulary  $\tau_d$ saying that $\Omega(M)\setminus A$ is dense.
 Thus $\psi_A$ says $\Omega(M)\cap D$ is dense.
 %Let $ M_B $ be a model as usual here representing the set $ B $ and of cardinality $ | B| $.  l
Let $ \phi $ be a $\Sigma^1_1$ sentence $\exists R\phi_0$ such that $\psi_A$ and $\phi$ are logically equivalent, contradicting our desired
conclusion.
%, so, as $M_A\models\psi_A$, there is an expansion $N_A$ of  $ M_ A $ which is a model of $\phi_0$. 
%
%Let   $ B_* $ be the set of points of  $ B $ which have a
%neighbourhood which is countable.  Let $B^* $ 
%be $B_*$ if possibility 1 or 2 of the claim holds and empty if 3 there holds. 
%
Let $ \langle D_ \alpha : \alpha < \omega _1\rangle $ be a sequence of disjoint countable dense subsets of $ D \cap U$.
Let  $ \mn_ \alpha $ be a countable model representing the set $ D_ \alpha$, whence it satisfies $ \psi_A $, hence $\phi$, and there is an expansion $ \mn^*_ \alpha $ of $\mn_
\alpha $ to a model of $ \phi_0$. Let $ \mathcal{N}=\langle N^*_ \alpha : \alpha < \omega _1 \rangle $.
Let $ \mathcal{B}=(H_\theta, \in, <) $, for a large enough cardinal $\theta$ and for 
 a well-ordering  $<$ of $H_\theta$.
%where $ \chi $ is just large enough really $ \chi - { \aleph_2 }  $ is enough.
%
We choose a countable elementary submodel 
$ \mathcal{B}^* $ of $ \mathcal{B}$ such that $\{\mathcal{N},A,\omega_1\}\subset \mathcal{B}^*$. 

By Theorem IV.5.19 of 
\cite{MR1083551} there is a sequence 
$ \langle  \mathcal{B} _ \alpha : \alpha< 2^{\omega}\rangle $ of countable elementary extensions of $ \mathcal{B}^* $ such that for every $\alpha<\beta< 2^\omega$:
\begin{description}
\item [(a)] $\mathcal{B} _ \alpha$ has standard $ \omega$.
\item [(b)] $\mathcal{B} _ \alpha$ has a (possibly non-standard) member  $ c_ \alpha $ of $(\omega _1)^{\mathcal{B}^* } $.
\item [(c)] If an element of $ {}^{ \omega } 2 $ is definable  in both $\mathcal{B} _ \alpha$ and  $\mathcal{B} _ {\beta}$, 
then it is in  $ \mathcal{B}^*$.
\end{description}
%
%We consider a theory in which there is a constant symbol for $c_\eta$.
%We then use a cardinality argument and the Omitting Types Theorem. Claim proved.

Let $ \mn_ \eta ^+ $ 
be the $ c_ \eta$'th
member of the sequence $ \langle \mn^*_ \alpha : \alpha < \omega _1 \rangle $
as interpreted in $ \mathcal{B} _ \eta $.
So necessarily $\mn_ \eta ^+$ is a model of $ \phi_0 $ and hence its reduct 
$\mn_ \eta ^+\restriction\tau_d$  is a model of $\phi$, and further of $\psi_A$.
% and the first order part of $ Q_A$.
%\medskip
%
We have continuum many models $\mn_ \eta ^+\restriction\tau_d$ of $\psi_A$.
However, we will now show that the number 
of $ \eta $ for which the model $\mn_ \eta ^+\restriction\tau_d$ satisfies $\psi_A$ is at most 
$|D|<2^\omega$, a contradiction. Suppose $\mn_\eta^+\restriction\tau_d\models \psi_A$. Then the subset of $2^\omega$ represented by $\mn_\eta^+=(\mn^*_{c_\eta})^{\mathcal{B}_\eta}$, i.e. $(D_{c_\eta})^{\mathcal{B}_\eta}$, meets $D$ in a dense set.  Every element of $(D_{c_\eta})^{\mathcal{B}_\eta}$ is definable in $\mathcal{B} _ \eta$.
By the disjointness clause (c)  above we get the claimed contradiction.

We now finish the proof of Theorem~\ref{wp}: Suppose $\eta$ is such that $\mn_\eta^+\not\models \psi_A$. This is a contradiction because  $\mn_\eta^+\models\phi$.
\end{proof}

\section{No strongest extension}\label{ie}

We show that there is no strongest extension among positive logics of first order logic, or $\Sigma^1_1$, or $\Sigma^1_{1,\delta}$, with the Compactness Theorem and the Downward L\"owenheim-Skolem Theorem.

We consider sequences $\A=\langle A_\alpha:\alpha\le\omega_1\rangle$ such that each $A_\alpha$, $\alpha<\omega_1$, is a countable dense subset of $2^\omega$, $\alpha<\beta$ implies $A_\alpha\subset A_\beta$, $A_{\omega_1}=\bigcup_{\alpha<\omega_1}A_\alpha$, and the set $S=\{\alpha<\omega_1 : A_\alpha=\bigcup_{\beta<\alpha}A_\beta\}$ is stationary.

Let $\Theta_{\rm TL}$ be  the first order sentence
$$\begin{array}{l}
\exists x(R_3(x)\wedge\forall y(\neg R_4(y)\vee R_2(x,y)))\wedge\\
\forall x\forall y(\neg R_0(x,y)\vee R_2(x,y))\wedge\\
\forall x\forall y(\neg R_1(x,y)\vee R_2(x,y))\wedge\\
\forall x\forall y(\neg R_2(x,y)\vee (R_4(x)\wedge R_4(y)))\wedge\\
\forall x (\neg R_4(x)\vee R_2(x,x))\wedge\\
\forall x\forall y\forall z(\neg R_2(x,y)\vee \neg  R_2(y,z)\vee R_2(x,z))\wedge\\
\forall x\forall y\forall z(\neg R_2(y,x)\vee\neg R_2(z,x)\vee R_2(y,z)\vee R_2(z,y)).
\end{array}$$
Intuitively, $\Theta_{\rm TL}$ says that $R_2$ is a tree-like partial order extending $R_0$ and $R_1$. For example, the model $\mm_A$ of Example~\ref{ee} always satisfies $\Theta_{\rm TL}$.
If $\mm\models\Theta_{\rm TL}$, then one element $a$ of $M$ can represent only one $\eta$, i.e.
\begin{equation}\label{unique}
\mbox{$\eta,\eta'\in \Omega(\mm,a)$ implies $\eta=\eta'$.}
\end{equation} 

\begin{definition}
We define the Lindstr\"om quantifier $Q_\A$ as follows. Suppose $\mm$ is a model and $\bc\in M^k$. Then we define that $\mm$ satisfies
\begin{equation}\label{Qcal}
(Q_\A x_0x_1)(\psi_0(x_0,x_1,\bc),\psi_1(x_0,x_1,\bc),\psi_2(x_0,x_1,\bc),\psi_3(x_0,\bc),\psi_4(x_0,\bc))
\end{equation}
if and only if $\mm_{\bar{\psi}}\models \Theta_{\rm TL}$ and $\Omega(\mm_{\bar{\psi}})\cap A_{\omega_1}\in \A$, where $\mm_{\bar{\psi}}$ is as in Definition~\ref{123} and 
$\Omega(\mm_{\bar{\psi}})$ is as in Definition~\ref{d}.\end{definition}

\begin{definition}\label{19}
We define $L^d_\A$ as the closure of first order logic under $\wedge,\vee,\exists$,
$\forall,\exists R$ and $Q_\A$. The fragment, where $Q_\A$ is applied to first order formulas $\bar \psi$ only is denoted $L^{d^-}_\A$. Similarly, $L^{d,\omega}_\A$, $L^{d,\infty}_\A$, 
$L^{d^-,\omega}_\A$, and $L^{d^-,\infty}_\A$. 
\end{definition}

\begin{theorem}[\L o\' s Lemma for $L^d_\A$]\label{Loss}Suppose $\cM_i$, $i\in I$, are models and $D$ is an $\omega_1$-incomplete ultrafilter on a set $I$. Let $\cM=\prod_{i\in I}\cM_i/D$, $f_0,\ldots,f_{n-1}\in\prod_{i\in I}M_i$ and $\phi(x_0,\ldots,x_{n-1})$ in  $L^d_\A$ (even in $L^{d,\infty}_\A$). Then 
$$\{i\in I : \cM_i\models \phi(f_0(i),\ldots,f_{n-1}(i))\}\in D\Rightarrow
\cM\models \phi(f_0/D,\ldots,f_{n-1}/D).$$
\end{theorem}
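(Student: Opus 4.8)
The plan is to argue by induction on $\phi$, following the proof of Theorem~\ref{Los} line by line. The steps for atomic and negated atomic formulas, for $\wedge$ (including the set-sized conjunctions of $L^{d,\infty}_\A$), $\vee$, $\exists$, $\forall$ and $\exists R$ are the standard ultraproduct arguments and carry over unchanged; the hypothesis that $D$ is $\omega_1$-incomplete is used only in the single new case, the quantifier $Q_\A$, and because we assume $\omega_1$-incompleteness we never need the analogue of Case~2 of Theorem~\ref{Los}, only the construction of its Case~1. I would therefore reduce at once to the induction step for $Q_\A$.

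For that step, assume $J=\{i\in I:\cM_i\models(Q_\A x_0x_1)(\psi_0(x_0,x_1,\bar f(i)),\ldots,\psi_4(x_0,\bar f(i)))\}\in D$. For $i\in J$ the $\tau_d$-structure $(\cM_i)_{\bar\psi}$ obtained from $\cM_i$ and $\bar f(i)$ satisfies $\Theta_{\rm TL}$, and $\Omega((\cM_i)_{\bar\psi})\cap A_{\omega_1}=A_{\alpha_i}$ for some $\alpha_i\le\omega_1$; since $A_{\alpha_i}\subseteq\Omega((\cM_i)_{\bar\psi})$ and every $A_\alpha$ is dense, $\Omega((\cM_i)_{\bar\psi})$ is dense. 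Let $\cM_{\bar\psi}$ denote the $\tau_d$-structure obtained from the ultraproduct $\cM$ and $\bar f/D$; I must verify the two clauses defining $Q_\A$ for $\cM_{\bar\psi}$. For the clause $\Omega(\cM_{\bar\psi})\cap A_{\omega_1}\in\A$ I would copy Case~1 of Theorem~\ref{Los}: fix a descending chain $J=I_0\supseteq I_1\supseteq\cdots$ in $D$ with empty intersection, take an arbitrary $\eta\in 2^\omega$, and for $i\in I_n\setminus I_{n+1}$ use the density of $\Omega((\cM_i)_{\bar\psi})$ to choose an extension of $\eta\restriction n$ that is represented in $(\cM_i)_{\bar\psi}$ together with its witnesses; assembling these into functions and applying the forward direction of the induction hypothesis to the formulas $\Gamma^{n,k}_{\bar\psi,\eta\restriction n}$ (which are positive combinations of the $\psi_j$, hence lie in $L^d_\A$) shows that $\eta$ is represented in $\cM_{\bar\psi}$. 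Thus $\Omega(\cM_{\bar\psi})=2^\omega$, so $\Omega(\cM_{\bar\psi})\cap A_{\omega_1}=A_{\omega_1}\in\A$; this is exactly where $\omega_1$-incompleteness and the presence of $A_{\omega_1}$ at the top of $\A$ enter.

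The remaining clause, $\cM_{\bar\psi}\models\Theta_{\rm TL}$, is where I expect the real difficulty. The sentence $\Theta_{\rm TL}$ is first order and holds in each $(\cM_i)_{\bar\psi}$, $i\in J\in D$, so by the classical {\L}o\'s theorem it holds in the honest ultraproduct $\prod_i(\cM_i)_{\bar\psi}/D$. The trouble is that the semantics of $Q_\A$ refers instead to $\cM_{\bar\psi}$, whose relations are the $\psi_j$-definable relations of $\cM$, and the induction hypothesis delivers only one inclusion---that the relations of $\prod_i(\cM_i)_{\bar\psi}/D$ are contained in those of $\cM_{\bar\psi}$---the reverse inclusion being exactly the backward direction of {\L}o\'s, which may fail for $L^d_\A$-formulas built with $\exists R$ or $Q_\A$. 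Since each conjunct of $\Theta_{\rm TL}$ carries a negative occurrence of some $R_j$, a spurious enlargement of a relation could in principle violate $\Theta_{\rm TL}$, so its transfer is not automatic. For the fragment $L^{d^-}_\A$, where the $\psi_j$ are first order, {\L}o\'s is two-sided, the two $\tau_d$-structures coincide, and $\Theta_{\rm TL}$ transfers immediately. For the full logic I would try to close the gap by using the uniqueness of representation guaranteed by $\Theta_{\rm TL}$ in the factors to show that the $\psi_j$-relations of $\cM_{\bar\psi}$ cannot properly exceed the ultraproduct relations on the portion of the structure relevant to $\Theta_{\rm TL}$ and to the computation of $\Omega$. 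Establishing this coincidence of relations is the step I would expect to cost the most.
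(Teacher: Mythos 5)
Your treatment of the clause $\Omega(\cM_{\bar\psi})\cap A_{\omega_1}\in\A$ is exactly the paper's proof: the paper likewise observes that only the $Q_\A$ step needs attention, reruns Case~1 of Theorem~\ref{Los} (the $\omega_1$-incompleteness hypothesis makes Case~2 unnecessary), concludes that every $\eta\in 2^\omega$ is represented in $\cM_{\bar\psi}$, and hence that $\Omega(\cM_{\bar\psi})\cap A_{\omega_1}=A_{\omega_1}\in\A$. The instructive point is what the paper does about the other half of the semantics of $Q_\A$, the clause $\cM_{\bar\psi}\models\Theta_{\rm TL}$: nothing. The paper's induction step ends with that conclusion and never mentions $\Theta_{\rm TL}$. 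So the difficulty you isolate is not a step you failed to reconstruct from the paper; it is a genuine gap in the paper's own argument, and your remark that it vanishes on the fragment $L^{d^-}_\A$ (where the $\bar\psi$ are first order, \L o\'s is two-sided, and $\cM_{\bar\psi}$ literally is $\prod_D(\cM_i)_{\bar\psi}$) is the correct diagnosis.

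Moreover, your proposed patch for the full logic (using uniqueness of representation to force the $\psi_j$-defined relations to agree with the ultraproduct relations) cannot work, because the lemma is false for full $L^d_\A$. Counterexample sketch: let each $\cM_i$ be the disjoint union of the tree model $\mm_{A_0}$ of Example~\ref{ee}, its sort marked by a unary predicate $T$ and its relations copied by symbols $S_0,S_1,E,P$, and of $(\omega,\le)$, its sort marked by a unary predicate $W$. Apply $Q_\A$ to $\psi_0:=S_0(x_0,x_1)$, $\psi_1:=S_1(x_0,x_1)$, $\psi_3:=P(x_0)$, $\psi_4:=T(x_0)$, and
$$\psi_2(x_0,x_1):=E(x_0,x_1)\vee\bigl(W(x_0)\wedge W(x_1)\wedge\theta(x_0)\bigr),$$
where $\theta(x)$ is a $\Sigma^1_1$-formula saying that $\{z:W(z)\wedge z\le x\}$ admits an injective, non-surjective self-map. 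In each factor $\theta$ defines the empty set, so $(\cM_i)_{\bar\psi}$ is just $\mm_{A_0}$ padded by points outside $R_4$; it satisfies $\Theta_{\rm TL}$ and $\Omega((\cM_i)_{\bar\psi})\cap A_{\omega_1}=A_0\in\A$, so every factor satisfies the $Q_\A$-sentence. But in $\cM=\prod_D\cM_i$ with $D$ $\omega_1$-incomplete, $\theta$ holds at every nonstandard $a\in W^{\cM}$, so $R_2^{\cM_{\bar\psi}}$ acquires pairs $(a,b)$ with $R_4^{\cM_{\bar\psi}}(a)$ false; the conjunct $\forall x\forall y(\neg R_2(x,y)\vee(R_4(x)\wedge R_4(y)))$ of $\Theta_{\rm TL}$ then fails, so the $Q_\A$-sentence fails in $\cM$. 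Hence Theorem~\ref{Loss} (and with it the compactness corollary that follows it) is correct only for the fragments in which $Q_\A$ is applied to first-order $\bar\psi$, i.e.\ $L^{d^-}_\A$, $L^{d^-,\omega}_\A$, $L^{d^-,\infty}_\A$ --- which, as you note, is all that is needed for the corollary following Theorem~\ref{lsk} and for Theorem~\ref{main}.
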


\begin{proof}We follow the proof of Theorem~\ref{Los}. The only point that requires attention is  the induction step for $Q_\A$. We   assume
%Claim: %Suppose 
%$$\{i\in I : M_i\models Q_A y_0,y_1\bar{\psi}(y_0,y_1,f_0(i),\ldots,f_{n-1}(i))\}\in D.$$ 
\begin{equation}\label{QsA}
J=\{u\in I : \mm_i\models Q_\A x_0x_1\psi_0(x_0,x_1,\bar f(i))
\ldots\psi_4(x_0,\bar f(i))\}\in D
\end{equation}
and demonstrate $M\models Q_\A x_0x_1\psi_0(x_0,x_1,\bar f/D)
\ldots\psi_4(x_0,\bar f/D)$.
As in the proof of Theorem~\ref{Los}, it can be shown that the set $B$ of $\eta\in 2^\omega$ such that
there is $a\in M$ such that  for some  
$ b^n_0,\ldots,b^n_{n-1}$ in $\prod_iM_i/D$ we have $\mm\models\Gamma^{n,k}_{\bar \psi,\eta\restriction n}(b^n_0,\ldots,b^n_{n-1},a,\bar f/D)$ for all $n<\omega$, is the full set $2^\omega$. It follows that  $2^\omega\cap A_{\omega_1}=A_{\omega_1}$ and hence that  $2^\omega\cap A_{\omega_1}\in\A$, as  claimed. 
\end{proof}

\begin{corollary}If $2^\omega\setminus A$ is dense, then
$L^d_\A$ (even $L^{d,\infty}_\A$) satisfies the (full) Compactness Theorem.
\end{corollary}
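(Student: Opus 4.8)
The plan is to reproduce, almost verbatim, the ultraproduct argument used for Corollary~\ref{wpp}, now feeding it into the \Los\ Lemma for $L^d_\A$ (Theorem~\ref{Loss}) instead of Theorem~\ref{Los}. So suppose $T$ is a finitely consistent $L^d_\A$-theory. I would let $I$ be the set of all finite subsets of $T$, choose for each $i\in I$ a model $\mm_i\models i$ (possible by finite consistency), and for each $\phi\in T$ put $A_\phi=\{i\in I:\phi\in i\}$. The family $\{A_\phi:\phi\in T\}$ has the finite intersection property, since $\{\phi_1,\ldots,\phi_m\}\in A_{\phi_1}\cap\cdots\cap A_{\phi_m}$, so it extends to an ultrafilter $D$ on $I$. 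The goal is then to show $\prod_D\mm_i\models T$, which by Theorem~\ref{Loss} follows once, for each $\phi\in T$, we know $\{i:\mm_i\models\phi\}\in D$; and indeed $\{i:\mm_i\models\phi\}\supseteq A_\phi\in D$.

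The one genuine point of difference from Corollary~\ref{wpp} — and the step I expect to be the only real obstacle — is that Theorem~\ref{Loss} is stated only for $\omega_1$-incomplete ultrafilters, whereas Theorem~\ref{Los} covered both the complete and incomplete cases. Thus I must verify that the ultrafilter $D$ above can be taken $\omega_1$-incomplete. The finite case is trivial (a finite $T$ is a member of its own $I$, hence has a model outright), so I may assume $T$ is infinite. Fixing distinct sentences $\phi_0,\phi_1,\ldots\in T$ and setting $I_n=A_{\phi_0}\cap\cdots\cap A_{\phi_{n-1}}$, each $I_n$ lies in $D$ (being a finite intersection of members of $D$), while $\bigcap_{n<\omega}I_n=\{i:\{\phi_0,\phi_1,\ldots\}\subseteq i\}=\emptyset$ because every $i\in I$ is finite. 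Hence $D$ is automatically $\omega_1$-incomplete, with no extra care required in its construction. This is exactly the descending chain with empty intersection that the incomplete case of Theorem~\ref{Loss} (via Theorem~\ref{Los}) needs.

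With $\omega_1$-incompleteness secured, Theorem~\ref{Loss} applies directly to yield $\prod_D\mm_i\models\phi$ for every $\phi\in T$, so $\prod_D\mm_i\models T$ and $T$ is consistent; since $T$ was an arbitrary finitely consistent theory of any size, this is the full Compactness Theorem, and the same ultraproduct lies in $L^{d,\infty}_\A$'s scope because Theorem~\ref{Loss} is proved for $L^{d,\infty}_\A$ as well. I would also remark that the displayed hypothesis ``$2^\omega\setminus A$ is dense'' plays no role in this argument (it is a holdover from Corollary~\ref{wpp}): Theorem~\ref{Loss} carries no density assumption, its place being taken by the trivial observation in its proof that $2^\omega\cap A_{\omega_1}=A_{\omega_1}\in\A$.
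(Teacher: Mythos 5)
Your proposal is correct and follows essentially the same route as the paper: the paper's proof is the single observation that the ultrafilter from the proof of Corollary~\ref{wpp} is regular, hence $\omega_1$-incomplete, so Theorem~\ref{Loss} applies. Your direct verification of $\omega_1$-incompleteness via the descending chain $I_n=A_{\phi_0}\cap\cdots\cap A_{\phi_{n-1}}$ with empty intersection just unfolds that same regularity observation (and your side remarks --- the trivial finite case, and the fact that the density hypothesis on $2^\omega\setminus A$ is not actually used --- are both accurate).
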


\begin{proof}
The ultrafilter we used in the proof of Corollary~\ref{wpp} was regular, hence $\omega_1$-incomplete.
\end{proof}

We can prove the Downward L\"owenheim-Skolem-Tarski Theorem for $L^{d^-}_\A$ only (see Proposition~\ref{nols} and Theorem~\ref{yes}). 

\begin{theorem}[Downward L\"owenheim-Skolem-Tarski Theorem]\label{lsk}
Sup\-pose $\mm$ is a model for a countable vocabulary and $X\subseteq M$ is countable. Then there is $\mn\preccurlyeq^-_{L^{d^-}_\A}\mm$ (even $\mn\preccurlyeq^-_{\Gamma}\mm$ for any fixed countable fragment of $L^{d,\omega}_\A$) such that $X\subseteq N$ and $|N|\le\aleph_0$.
\end{theorem}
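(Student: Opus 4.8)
The plan is to imitate the proof of Theorem~\ref{ls}: I would realize $\mn$ as the trace $M\cap K$ of $\mm$ on a countable elementary submodel $K\prec H_\theta$ of a large enough $H_\theta$, so that $\mn$ becomes a genuine first order elementary submodel of $\mm$ and therefore every first order formula is absolute between $\mn$ and $\mm$. The induction on the complexity of an $L^{d^-}_\A$-formula then proceeds exactly as in Theorem~\ref{ls} in every case except the one for $Q_\A$, which carries the whole argument. The single genuinely new device, and the reason the hypotheses of this section (in particular that $S$ is stationary) are needed, is that $K$ must be chosen so that $K\cap\omega_1\in S$.

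First I would fix a regular $\theta\ge(2^{\aleph_0})^+$ with $\mm,\A,A_{\omega_1},X\in H_\theta$ and, after expanding $\mm$ by Skolem functions and second order witness predicates exactly as in Theorem~\ref{ls} (this keeps the vocabulary countable), choose a countable $K\prec H_\theta$ with $\{\mm,\A,A_{\omega_1},X\}\cup X\subseteq K$ and, crucially, $\delta:=K\cap\omega_1\in S$. This is possible because the set of $\delta<\omega_1$ arising as the ordinal $K\cap\omega_1$ for some countable $K\prec H_\theta$ containing the fixed parameters contains a club, and $S$ is stationary. Set $N=M\cap K$ and let $\mn$ be the induced substructure; then $X\subseteq N$ and $|N|\le\aleph_0$. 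As in Theorem~\ref{ls}, $N$ is closed under the functions of $\mm$, and since $\mm\in K\prec H_\theta$ we get $\mn\preccurlyeq\mm$ in the sense of ordinary first order logic, so first order formulas with parameters in $N$ have the same truth value in $\mn$ and in $\mm$.

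The induction $\mm\models\phi(\ba)\Rightarrow\mn\models\phi(\ba)$ (for $\ba\in N$) is routine for atomic and negated atomic formulas, for $\wedge,\vee,\forall$, and for $\exists,\exists R$, just as in Theorem~\ref{ls}; a fixed countable fragment allowing countable conjunctions is handled by adding the fragment to $K$, the extra step for $\bigwedge_n\phi_n$ being immediate. The essential case is $Q_\A$, where I use that in $L^{d^-}_\A$ the argument formulas $\bar\psi$ are first order. Assume $\mm$ satisfies (\ref{Qcal}) with $\bc\in N^k$, so $\mm_{\bar\psi}\models\Theta_{\rm TL}$ and $\Omega(\mm_{\bar\psi})\cap A_{\omega_1}=A_\gamma$ for some $\gamma\le\omega_1$. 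Since $\bar\psi$ and $\Theta_{\rm TL}$ are first order, $\mn_{\bar\psi}\models\Theta_{\rm TL}$. Next I would establish $\Omega(\mn_{\bar\psi})=\Omega(\mm_{\bar\psi})\cap K$: if $a\in N$ represents $\eta$ then, each $\psi_{\eta\restriction n}$ being first order, $a$ already represents $\eta$ in $\mm_{\bar\psi}$, and by the uniqueness (\ref{unique}) forced by $\Theta_{\rm TL}$ the real $\eta$ is definable in $H_\theta$ from $a,\bc,\mm,\bar\psi\in K$, so $\eta\in K$; conversely if $\eta\in K\cap\Omega(\mm_{\bar\psi})$ then $H_\theta$ sees a representative of $\eta$, so elementarity of $K$ provides one in $N$, and by first order absoluteness it represents $\eta$ in $\mn_{\bar\psi}$ as well.

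It remains to compute $A_{\omega_1}\cap K$. For $\alpha<\delta=K\cap\omega_1$ the set $A_\alpha\in K$ is countable in $H_\theta$, hence $A_\alpha\subseteq K$; and any $\eta\in A_{\omega_1}\cap K$ has a least $\alpha$ with $\eta\in A_\alpha$, an $\alpha$ definable from $\eta,\A$ and so lying in $K\cap\omega_1=\delta$. Thus $A_{\omega_1}\cap K=\bigcup_{\alpha<\delta}A_\alpha$, and here the choice $\delta\in S$ yields $\bigcup_{\alpha<\delta}A_\alpha=A_\delta$. Since the $A_\alpha$ are nested,
$$\Omega(\mn_{\bar\psi})\cap A_{\omega_1}=\Omega(\mm_{\bar\psi})\cap K\cap A_{\omega_1}=A_\gamma\cap A_\delta=A_{\min(\gamma,\delta)}\in\A,$$
so $\mn$ satisfies (\ref{Qcal}) and the induction closes. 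The hard part is exactly this $Q_\A$ step: first order elementarity of $\mn$ in $\mm$ only pins down $\Omega(\mn_{\bar\psi})$ when $\bar\psi$ is first order, which is precisely why the theorem is restricted to $L^{d^-}_\A$; and the decisive membership $A_{\omega_1}\cap K\in\A$ is what compels the requirement $\delta\in S$ and hence the use of the stationarity of $S$.
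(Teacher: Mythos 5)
Your proposal is correct and follows essentially the same route as the paper's proof: the same Skolem/witness expansion, a countable $K\prec H_\theta$ chosen with $\delta=K\cap\omega_1\in S$, transfer of representation via the first-orderness of $\bar\psi$, the uniqueness (\ref{unique}) from $\Theta_{\rm TL}$ to pull represented reals into $K$, and the identity $A_{\omega_1}\cap K=A_\delta$ from stationarity. The only difference is organizational: where the paper splits into the cases $\Omega(\mm_{\bar\psi})\cap A_{\omega_1}=A_\alpha$ ($\alpha<\omega_1$) and $=A_{\omega_1}$, you prove the uniform identity $\Omega(\mn_{\bar\psi})=\Omega(\mm_{\bar\psi})\cap K$ and conclude with $A_\gamma\cap A_\delta=A_{\min(\gamma,\delta)}\in\A$, which is a harmless (indeed slightly cleaner) repackaging of the same argument.
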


\begin{proof}
We first expand $\mm$ as follows: For every $L^{d^-}_\A$-formula $\phi(R,\bz)$, where $R$ is $n$-ary and $\bz=z_0,\ldots,z_{k-1}$, there is a predicate symbol $R^*$ of arity $k+n$ such that if $\mm\models \exists R\phi(R,\bc)$, then $\mm\models\phi(R^*(\bc,\cdot),\bc)$. Likewise, we may assume the vocabulary of $\mm$ has a Skolem function $f_\phi$ for each formula $\phi(x,\bz)$ such that if $\mm\models\exists\phi(x,\bc)$, then $\mm\models\phi(f_\phi(\bc),\bc)$. Let $\tau$ be the original vocabulary of $\mm$ and $\tau^*$ the vocabulary of the expansion, which we also denote $\mm$. For any formulas $\bar \psi$ in $L^{d-}_\A$ of the vocabulary $\tau^*$ let   
$g(n,k,\bar \psi, \eta)$ be the function which maps $n,k,{\bar \psi}$ and $\eta\in 2^n$ to $\Gamma^{n,k}_{\bar \psi,\eta}(y_0,\ldots,y_n,x,\bz)$. Recall that $S=\{\alpha<\omega_1 : A_\alpha=\bigcup_{\beta<\alpha}A_\beta\}$ is stationary. Let $K\prec H_\theta$, where $\theta\ge(2^\omega)^+$ such that $M\subseteq H_\theta$, $|K|=\aleph_0$, $\{\A,\omega_1,\tau^*,\mm, X,g\}\cup \omega_1\cup\tau\cup X\subseteq K$, and $\delta=K\cap\omega_1\in S$.
Let $\mn$ be the restriction of $\mm$ to $K$, i.e. the universe $N$ of $\mn$ is $M\cap K$ and the constants, relations and functions of $\mm$ are relativized to $N$.

As in the proof of Theorem~\ref{ls},  $N$ is closed under the interpretations of function symbols of the vocabulary of $\mm$. 

\medskip

\noindent{\bf Claim:\ } If $\phi(\vec{x})$ is a $\tau^*$-formula in $L^{d^-}_\A$, then $\mm\models\phi(\vec{c})\Rightarrow\mn\models\phi(\vec{c})$. 
\medskip

We use induction on $\phi$.  In light of the proof of Theorem~\ref{ls}, we only need to consider the quantifier $Q_\A$.
Suppose $\mm$ satisfies (\ref{Qcal}) with $\bc\in N^k$. 
Thus $\Omega(\mm_{\bar{\psi}})\cap A_{\omega_1}\in \A$. Hence
$$K\models``\Omega(\mm_{\bar{\psi}})\cap A_{\omega_1}\in \A".$$ Note that since $\delta\in S$, $K\cap A_{\omega_1}=A_\delta.$

\noindent{\bf Case 1:} $\Omega(\mm_{\bar{\psi}})\cap A_{\omega_1}=A_\alpha$ for some $\alpha<\omega_1$. Then $\alpha<\delta$ and  
$$K\models``\Omega(\mm_{\bar{\psi}})\cap A_{\omega_1}=A_\alpha".$$ %
We prove $\Omega(\mn_{\bar{\psi}})\cap A_{\omega_1}=A_\alpha,$
from which $\Omega(\mn_{\bar{\psi}})\cap A_{\omega_1}\in\A$ follows.

Let first $\eta\in \Omega(\mn_{\bar{\psi}})\cap A_{\omega_1}$. 
There are  $a\in N$ and $ b^n_0,\ldots,b^n_{n-1}$ in  $N$ such that $\mn\models\Gamma^{n,k}_{\bar \psi,\eta\restriction n}(b^n_0,\ldots,b^n_{n-1},a,\bc)$ for all $n<\omega$.
Since the formulas of $\bar \psi$ are first order, $\mm\models\Gamma^{n,k}_{\bar \psi,\eta\restriction n}(b^n_0,\ldots,b^n_{n-1},a,\bc)$ for all $n<\omega$. Hence $\eta\in
\Omega(\mm_{\bar{\psi}})\cap A_{\omega_1}=A_\alpha.$

For the converse, let $\eta\in A_\alpha$. Note that now $\eta\in K$. By the choice of $\alpha$,
$\eta\in
\Omega(\mm_{\bar{\psi}})$. Hence there is $a\in M$ such that  for some  $ b^n_0,\ldots,b^n_{n-1}$
in  $M$ we have $\mm\models\Gamma^{n,k}_{\bar \psi,\eta\restriction n}(b^n_0,\ldots,b^n_{n-1},a,\bc)$ for all $n<\omega$. Such an $a$ and such $b^n_0,\ldots,b^n_{n-1}$ exist also in $K$, by elementarity, as $\eta\in K$.  
By Induction Hypothesis, $\mn\models\Gamma^{n,k}_{\bar \psi,\eta\restriction n}(b^n_0,\ldots,b^n_{n-1},a,\bc)$ for all $n<\omega$. Thus $\eta\in
\Omega(\mn_{\bar{\psi}})$.
\smallskip

\noindent{\bf Case 2:} $\Omega(\mm_{\bar{\psi}})\cap A_{\omega_1}=A_{\omega_1}$. Then   
$K\models``\Omega(\mm_{\bar{\psi}})\cap A_{\omega_1}=A_\delta".$ %
We prove $\Omega(\mn_{\bar{\psi}})\cap A_{\omega_1}=A_\delta,$
from which $\Omega(\mn_{\bar{\psi}})\cap A_{\omega_1}\in\A$ follows.

Let first $\eta\in \Omega(\mn_{\bar{\psi}})\cap A_{\omega_1}$. As in Case 1,
 $\eta\in
\Omega(\mm_{\bar{\psi}})$. Because we have (\ref{unique}), that is, $\eta$ is determined by an element of $N$, we may conclude $\eta\in K$. By $K\prec H_\theta$, $\eta\in (A_{\omega_1})^K$. Hence $\eta\in A_\delta$.

For the converse, let $\eta\in A_\delta$. Since $\delta\in S$, $A_\delta\subset K$,  and hence $\eta\in K$. On the other hand,  $\eta\in
\Omega(\mm_{\bar{\psi}})$, since $A_\delta\subset A_{\omega_1}=\Omega(\mm_{\bar{\psi}})\cap A_{\omega_1}$. Now we can argue as in Case 1 to conclude   $\eta\in
\Omega(\mn_{\bar{\psi}})$.
\end{proof}
\medskip

A consequence of Corollary~\ref{Loss} and Theorem~\ref{lsk} is that the positive logic $L^{d^-}_\A$ is an extension of $\Sigma^1_1$ with both the Compactness Theorem and the Downward L\"owenheim-Skolem Theorem. Similarly, $L^{d^-,\omega}_\A$ is such an extension of $\Sigma^1_{1,\delta}$. 

\begin{theorem}\label{main}There are positive logics $L_1$ and $L_2$ such that 
\begin{enumerate}
\item $L_1,L_2$ both (properly) extend $\Sigma^1_1$.
\item $L_1,L_2$ both satisfy the Compactness Theorem and the Downward L\"owenheim-Skolem Theorem.
\item There is no logic $L_3$ such that $L_1\le L_3$, $L_2\le L_3$, and $L_3$ satisfies
the Downward L\"owenheim-Skolem Theorem.
\end{enumerate}
We can replace $\Sigma^1_1$ by $\Sigma^1_{1,\delta}$.
\end{theorem}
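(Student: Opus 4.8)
The plan is to take $L_1=L^{d^-}_{\A_1}$ and $L_2=L^{d^-}_{\A_2}$ for two sequences $\A_1=\langle A^1_\alpha:\alpha\le\omega_1\rangle$ and $\A_2=\langle A^2_\alpha:\alpha\le\omega_1\rangle$ of the kind fixed at the start of Section~\ref{ie}, chosen so that they share the same top set $A^1_{\omega_1}=A^2_{\omega_1}=:A^*$ while no \emph{countable} level of one equals a level of the other; equivalently $\A_1\cap\A_2=\{A^*\}$. I would also keep $A^*$ co-dense (have all reals used avoid a fixed dense set), so that $2^\omega\setminus A^*$ is dense. Granting such $\A_1,\A_2$, each $L_i$ satisfies items 1 and 2: it contains $\Sigma^1_1$ by construction, it satisfies the Compactness Theorem by the Corollary following Theorem~\ref{Loss} (regular ultrafilters are $\omega_1$-incomplete), and it satisfies the Downward L\"owenheim--Skolem Theorem by Theorem~\ref{lsk}, since $S_i=\{\alpha<\omega_1 : A^i_\alpha=\bigcup_{\beta<\alpha}A^i_\beta\}$ is stationary. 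Properness over $\Sigma^1_1$ (and of $L^{d^-,\omega}_{\A_i}$ over $\Sigma^1_{1,\delta}$) follows by a non-definability argument of the type in Section~\ref{s6}; in any case at least one $Q_{\A_i}$ is non-$\Sigma^1_1$-definable, since otherwise the sentence $\theta$ below would be a $\Sigma^1_1$-sentence with a model but no countable model, and the construction can be taken symmetric in $\A_1,\A_2$.

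For item 3, let $L_3$ be \emph{any} positive logic with $L_1\le L_3$ and $L_2\le L_3$. Being closed under conjunction and containing both $Q_{\A_1}$ and $Q_{\A_2}$, it contains $\theta:=\psi_{\A_1}\wedge\psi_{\A_2}$, where $\psi_{\A_i}$ is the $\tau_d$-sentence $(Q_{\A_i}x_0x_1)(R_0(x_0,x_1),R_1(x_0,x_1),R_2(x_0,x_1),R_3(x_0),R_4(x_0))$, asserting $\mm\models\Theta_{\rm TL}$ and $\Omega(\mm)\cap A^*\in\A_i$. I claim $\theta$ has a model but no countable model, so $L_3$ fails the Downward L\"owenheim--Skolem Theorem, which together with item 2 gives the whole statement. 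On the one hand, the model $\mm_{A^*}$ of Example~\ref{ee} satisfies $\Theta_{\rm TL}$ and has $\Omega(\mm_{A^*})=A^*$, whence $\Omega(\mm_{A^*})\cap A^*=A^*=A^i_{\omega_1}\in\A_i$ for $i=1,2$; thus $\mm_{A^*}\models\theta$ and $|M|=\aleph_1$. On the other hand, if $\mn\models\theta$ with $\mn$ countable, then $\Omega(\mn)$ is countable, so $\Omega(\mn)\cap A^*$ would be a countable member of $\A_1\cap\A_2=\{A^*\}$, contradicting the uncountability of $A^*$. The same finite conjunction $\theta$ witnesses the failure for the $\Sigma^1_{1,\delta}$-version, taking $L_i=L^{d^-,\omega}_{\A_i}$.

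The real work, and the main obstacle, is the construction of $\A_1,\A_2$: two strictly increasing filtrations of the \emph{same} uncountable $A^*$ into countable dense levels, each continuous on a stationary set, with disjoint countable ranges. This is impossible with two everywhere-continuous filtrations: a standard closing-off argument shows that such a filtration equals the union of its earlier levels and is index-determined at club-many stages, so two of them agree at club-many levels; the needed flexibility comes precisely from the \emph{stationary} (not club) continuity requirement. I would fix two disjoint stationary sets $S_1,S_2\subseteq\omega_1$ and build the filtrations by a simultaneous recursion, making $\A_i$ continuous exactly on $S_i$ and forcing a ``jump'' (the insertion of a fresh, as-yet-unused real) at every limit in $S_{3-i}$ and at successor stages. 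The forced values, those at continuity points, are separated using $S_1\cap S_2=\emptyset$: a level that happens to be an initial segment occurs for $\A_i$ only along $S_i$, so such levels never match across the two filtrations; the freedom at the jump stages is then spent diagonalizing so that no jump-level of one filtration equals any level of the other. Throughout one arranges that every real is eventually added to both filtrations, so the two unions agree and equal $A^*$, and that every level contains a fixed countable dense set $E$, making density of the levels automatic. Balancing these three demands at once — equal unions, stationary continuity, and disjoint ranges — is the delicate point, but the disjointness of $S_1$ and $S_2$ for the forced values together with fresh-real bookkeeping at the jump stages is what I expect to make it go through.
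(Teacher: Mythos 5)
Your overall architecture coincides with the paper's proof of Theorem~\ref{main}: take $L_i=L^{d^-}_{\A_i}$ for two sequences sharing the top set $A^*$ and satisfying $\A_1\cap\A_2=\{A^*\}$; get the Compactness Theorem from Theorem~\ref{Loss} (regular ultrafilters are $\omega_1$-incomplete), the Downward L\"owenheim-Skolem Theorem from Theorem~\ref{lsk}, and kill any common extension $L_3$ with the conjunction $\psi_{\A_1}\wedge\psi_{\A_2}$, which holds in $\mm_{A^*}$ but in no countable model since $\Theta_{\rm TL}$ (built into the semantics of $Q_\A$) makes representation unique, so a countable model represents only countably many reals, while the only common member of $\A_1\cap\A_2$ is the uncountable $A^*$. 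That part of your argument is correct and is exactly the paper's.

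The genuine gap is exactly where you locate the ``real work'': you never actually construct $\A_1$ and $\A_2$, and your sketch of the simultaneous recursion rests on a separation mechanism that is insufficient as stated. Saying that ``a level that happens to be an initial segment occurs for $\A_i$ only along $S_i$, so such levels never match'' does not follow from $S_1\cap S_2=\emptyset$: the same countable set can perfectly well arise as the forced continuity value of $\A_1$ at some $\delta\in S_1$ and of $\A_2$ at some different $\delta'\in S_2$ (continuity values cannot be diagonalized directly; one must show that the fresh reals inserted at jump stages propagate into every later initial-segment union of the other filtration, and that maintenance-of-invariants argument is precisely what you leave to ``expectation''). The paper dissolves this obstacle with a one-line device you missed: take a \emph{single} strictly increasing filtration $\langle A_\alpha:\alpha<\omega_1\rangle$ of $A^*$ into countable dense sets, continuous at \emph{every} limit, fix disjoint stationary $S,S'\subseteq\omega_1$, and set $\A_1=\langle A_\alpha:\alpha\in S\rangle\frown\langle A^*\rangle$ and $\A_2=\langle A_\alpha:\alpha\in S'\rangle\frown\langle A^*\rangle$. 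Strict monotonicity makes $\alpha\mapsto A_\alpha$ injective, so the two ranges are disjoint below the top with no further argument; and the reindexed sequences still have stationarily many continuity points, because every $\alpha\in S$ that is a limit of points of $S$ satisfies $A_\alpha=\bigcup_{\beta\in S\cap\alpha}A_\beta$, and such $\alpha$ form a stationary set. With this substitution your proof is complete; as written, it is a sketch of the hardest step rather than a proof of it. (Incidentally, properness of \emph{both} extensions needs no symmetry appeal: if $L_1$ were a sublogic of $\Sigma^1_1$, then $L_1\le\Sigma^1_1\le L_2$ and $L_3=L_2$ would violate item 3, since $L_2$ satisfies the Downward L\"owenheim-Skolem Theorem.)
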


\begin{proof}
Let $\A$ be as above but $A_\alpha=\bigcup_{\beta<\alpha}A_\beta$ for all limit $\alpha$. Let $S,S'\subseteq\omega_1$ be disjoint stationary sets. Note that the set of elements of $S$ that are limits of elements of $S$ is stationary, because it contains the  intersection of $S$ with the closed unbounded set of limits of elements of $S$. Similarly,  the set of elements of $S'$ that are limits of elements of $S'$ is stationary. Let $\A=\langle A_\alpha:\alpha\in S\rangle\char 94\langle A_{\omega_1}\rangle$ and $\A'=\langle A_\alpha:\alpha\in S'\rangle\char 94\langle A_{\omega_1}\rangle$. Now both $\{\alpha\in S:A_\alpha=\bigcup_{\beta\in \alpha\cap S}A_\beta\}$ and $\{\alpha\in S':A_\alpha=\bigcup_{\beta\in \alpha\cap S'}A_\beta\}$ are stationary.
Let 
$$\psi_{\A}=(Q_{\A}x_0x_1)(R_0(x_0,x_1),R_1(x_0,x_1),R_2(x_0,x_1),R_3(x_0),R_4(x_0))$$
and similarly $\psi_{\A'}$.
Let $\phi$ be the sentence $\psi_{\A}\wedge \psi_{\A'}\wedge\Theta_{TL}$. This sentence has a model, namely $\mm_{A_{\omega_1}}$. Suppose it has a countable model $\mn$. Then $\Omega(\mn)\cap A_{\omega_1}\in \A\cap\A'$. Hence $\Omega(\mn)\cap A_{\omega_1}= A_{\omega_1}$. Since $\mn\models\Theta_{TL}$, $N$ must be uncountable, a contradiction.  
\end{proof}

\begin{corollary}
No extension of $\Sigma_1^1$ is strongest with respect to the Compactness Theorem and the Downward L\"owenheim-Skolem Theorem,  among positive logics.
\end{corollary}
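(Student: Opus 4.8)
The plan is to read this off directly from Theorem~\ref{main} by a short contradiction argument, since the corollary contains no mathematical content beyond that theorem. First I would make the notion of \emph{strongest} explicit in the negation-free setting: a positive logic $L^*$ is a strongest extension of $\Sigma^1_1$ with respect to the two properties if $L^*$ extends $\Sigma^1_1$, satisfies both the Compactness Theorem and the Downward L\"owenheim-Skolem Theorem, and every positive logic that extends $\Sigma^1_1$ and satisfies these two properties is a sublogic of $L^*$.

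Suppose toward a contradiction that such an $L^*$ exists. I would apply Theorem~\ref{main} to obtain positive logics $L_1$ and $L_2$. By clause~1 each of them extends $\Sigma^1_1$, and by clause~2 each satisfies the Compactness Theorem and the Downward L\"owenheim-Skolem Theorem, so both fall under the maximality clause defining $L^*$; hence $L_1\le L^*$ and $L_2\le L^*$. Taking $L_3=L^*$, we then have $L_1\le L_3$, $L_2\le L_3$, and $L_3$ satisfies the Downward L\"owenheim-Skolem Theorem---exactly the situation forbidden by clause~3 of Theorem~\ref{main}. This contradiction shows no strongest extension can exist. Since the closing sentence of Theorem~\ref{main} permits $\Sigma^1_1$ to be replaced by $\Sigma^1_{1,\delta}$, the same three lines give the corresponding statement for $\Sigma^1_{1,\delta}$.

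I expect no genuine obstacle in the corollary itself; the entire difficulty is already absorbed into Theorem~\ref{main}, whose crux is choosing disjoint stationary $S,S'\subseteq\omega_1$ so that the quantifiers $Q_\A$ and $Q_{\A'}$ are individually well-behaved but jointly incompatible---the conjunction $\psi_\A\wedge\psi_{\A'}\wedge\Theta_{\rm TL}$ forces $\Omega(\mn)\cap A_{\omega_1}\in\A\cap\A'=\{A_{\omega_1}\}$, and then the uniqueness-of-representation property (\ref{unique}) makes any model uncountable. The single point I would be careful about is bookkeeping: to ensure $L_1\le L^*$ and $L_2\le L^*$ I must quantify ``strongest'' over precisely the class of positive logics extending $\Sigma^1_1$ with both properties, which is the same class in which Theorem~\ref{main} furnishes its witnesses $L_1,L_2$.
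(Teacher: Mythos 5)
Your proposal is correct and is essentially the paper's own argument: the paper states this corollary without proof, treating it as an immediate consequence of Theorem~\ref{main}, and your contradiction argument (a putative strongest $L^*$ would serve as the $L_3$ forbidden by clause~3) is exactly the intended reading, including the footnoted definition of ``strongest'' as containing every other such logic as a sublogic.
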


We shall now prove that Theorem~\ref{lsk} does not hold with $L^{d-}_\A$ replaced by $L^d_\A$.

\begin{proposition}\label{nols}
Suppose $\A$ is as above. There is an uncountable model $\mm$  for a countable vocabulary such that  there is no countable $\mn\preccurlyeq^-_{L^{d}_\A}\mm$.
\end{proposition}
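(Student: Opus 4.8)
The plan is to exhibit an uncountable model $\mm$ and show that the Downward L\"owenheim-Skolem-Tarski property fails for $L^d_\A$ (with $Q_\A$ applied to arbitrary $L^d_\A$-formulas, not just first order ones), in contrast to Theorem~\ref{lsk}. The natural candidate is a model that encodes the whole sequence $\A=\langle A_\alpha:\alpha\le\omega_1\rangle$ together with a tree structure representing $A_{\omega_1}$, so that any submodel $\mn\preccurlyeq^-_{L^d_\A}\mm$ is forced to recognize (via $Q_\A$ applied to second-order-definable relations) that its represented set equals some $A_\alpha$ in the sequence. The key leverage that $L^d_\A$ has over $L^{d^-}_\A$ is exactly the $\exists R$ quantifier feeding into $Q_\A$: with it one can use $Q_\A$ on formulas $\bar\psi$ that themselves quantify over relations, thereby forcing the submodel to "see" more of the sequence $\A$ than stationarity alone permits.

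The main steps I would carry out are as follows. First I would build $\mm$ of size $\aleph_1$ whose universe contains (a copy of) $2^{<\omega}$, the set $A_{\omega_1}$, and an $\aleph_1$-indexed system of unary (or binary) predicates $P_\alpha$, coded by a single binary relation $P\subseteq \omega_1\times M$ with $P(\alpha,\cdot)$ picking out $A_\alpha$, so that each initial segment $A_\alpha$ of the filtration is uniformly definable from a parameter $\alpha$ ranging over a linear order $L$ of type $\omega_1$. The tree-like order $R_2$ should satisfy $\Theta_{\rm TL}$ so that representation is unique, as in~(\ref{unique}). Second, I would show that $\mm\models\psi$ for the intended witnessing sentence, where $\psi$ uses $Q_\A$ applied to formulas defining the full tree, so that $\Omega(\mm_{\bar\psi})\cap A_{\omega_1}=A_{\omega_1}\in\A$. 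Third, and this is the crux, I would argue that any countable $\mn\subseteq \mm$ with $\mn\preccurlyeq^-_{L^d_\A}\mm$ must, via the downward transfer of $Q_\A$-sentences whose $\bar\psi$ now invoke $\exists R$, pin down $\Omega(\mn_{\bar\psi})\cap A_{\omega_1}$ to be $A_\delta$ where $\delta=\mn\cap\omega_1$; but by arranging the coding so that the relevant $\delta$ is forced to \emph{not} lie in the stationary index set $S$ (equivalently, so that $A_\delta\ne\bigcup_{\beta<\delta}A_\beta$ for the countable ordinal $\delta$ that a countable submodel produces), one gets $A_\delta\notin\A$, contradicting $\mn\preccurlyeq^-_{L^d_\A}\mm$.

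Concretely, the contradiction should come from the tension between two facts about a countable $\mn\preccurlyeq^-_{L^d_\A}\mm$: downward preservation forces $\Omega(\mn_{\bar\psi})\cap A_{\omega_1}\in\A$, while the extra expressive power of $Q_\A$ on $\exists R$-formulas forces $\Omega(\mn_{\bar\psi})\cap A_{\omega_1}$ to equal an initial segment $A_\delta$ indexed by an ordinal $\delta$ that a purely countable elementary restriction cannot be made to land inside $S$ — precisely because, unlike in the proof of Theorem~\ref{lsk} where only first order $\bar\psi$ are used and the stationarity of $S$ could be exploited via $\delta=K\cap\omega_1\in S$, here the second-order quantifier lets us define an auxiliary relation witnessing that the represented set is \emph{exactly} some earlier $A_\delta$ with $\delta\notin S$. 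So $\Omega(\mn_{\bar\psi})\cap A_{\omega_1}=A_\delta\notin\A$, contradicting preservation.

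The hard part will be designing the witnessing second-order formula and the coding of $\A$ so that the $\exists R$ quantifier genuinely forces identification of a single initial segment $A_\delta$ with $\delta\notin S$ — that is, making the second-order definable relation $R$ encode an order-isomorphism or a cofinal map that certifies which $A_\alpha$ the represented set equals, in a way that survives the downward $\preccurlyeq^-$ transfer. One must check carefully that $\mm$ can be taken over a \emph{countable} vocabulary (coding $\omega_1$-many predicates into one binary relation and a linear order), that $\mm$ genuinely has no countable $\preccurlyeq^-_{L^d_\A}$-submodel rather than merely failing for one particular fragment, and that the argument does not accidentally also rule out the positive transfer for $L^{d^-}_\A$ already established in Theorem~\ref{lsk}. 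I expect the delicate point to be verifying that a countable submodel is \emph{compelled} by $Q_\A$-on-$\exists R$-formulas to recognize its represented set as the specific $A_\delta$, since this is exactly where $L^d_\A$ strictly exceeds $L^{d^-}_\A$.
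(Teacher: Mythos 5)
Your plan has a fatal structural flaw located exactly at the point you defer as ``the delicate point.'' First, a definitional slip: $\A$ is the sequence $\langle A_\alpha:\alpha\le\omega_1\rangle$, so $A_\delta\in\A$ for \emph{every} $\delta\le\omega_1$, whether or not $\delta\in S$; stationarity of $S$ concerns continuity of the filtration, not membership in $\A$. The only sets your mechanism could possibly exhibit outside $\A$ are unions $\bigcup_{\beta<\delta}A_\beta$ at limit $\delta\notin S$, so your concluding claim ``$\Omega(\mn_{\bar\psi})\cap A_{\omega_1}=A_\delta\notin\A$'' is false as written. Second, and decisively: the proposition requires a contradiction from \emph{every} countable $\mn\preccurlyeq^-_{L^{d}_\A}\mm$, and among the candidates are the restrictions of $\mm$ to countable $K\prec H_\theta$ with $K\cap\omega_1=\delta\in S$; stationarily many such $\delta$ exist no matter how you code $\A$ into $\mm$. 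For such a submodel the trace of the filtration is $\bigcup_{\beta<\delta}A_\beta=A_\delta\in\A$, and your mechanism yields no contradiction at all. So no coding can ``force $\delta$ to land outside $S$'' for all countable submodels; the route can rule out some submodels but never all of them, which is what is needed.

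The paper's proof uses a mechanism orthogonal to $S$ and stationarity --- a cardinality collapse, visible to $Q_\A$ precisely because $L^d_\A$ (unlike $L^{d^-}_\A$) admits $\Sigma^1_1$ formulas inside $Q_\A$ --- and only the levels $A_0\subset A_1\subset A_2$ matter. The universe is $2^\omega\cup 2^{<\omega}\cup(2^\omega\times\omega_1)$ with the usual tree relations, a predicate $R_7$ for $2^{<\omega}$, relations ($R_5,Q_1,Q_2$) giving each $a\in A_0$ a countably infinite fiber $\{(a,\alpha):\alpha<\omega\}$ and each $a\in A_2\setminus A_1$ an uncountable fiber $\{(a,\alpha):\alpha<\omega_1\}$, and a relation $Q_3$ recording which branch enters the filtration strictly earlier; the vocabulary is finite, so your worry about coding $\omega_1$-many predicates does not arise. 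The $\Sigma^1_1$ formula $\phi(x)$ saying ``$R_6(x)$ and some $F$ maps $R_7$ one-one onto the fiber of $x$'' defines $A_0$ in $\mm$, but in any countable $\mn\preccurlyeq^-_{L^{d}_\A}\mm$ it defines $(A_0\cup(A_2\setminus A_1))\cap N$, since all fibers there are countably infinite. Plugging $\phi$ into the fourth slot of $Q_\A$ gives a sentence true in $\mm$ (the represented set is $A_0\in\A$) but false in $\mn$: since FO with negation lies in $L^d_\A$, the relation $\preccurlyeq^-$ implies full first order elementarity, and $Q_2,Q_3$ make $A_1\setminus A_0$ and $A_2\setminus A_1$ first order definable, so $N$ contains a point of each; hence $\Omega(\mn_{\bar\psi})\cap A_{\omega_1}$ contains a point of $A_2\setminus A_1$ yet omits a point of $A_1\setminus A_0$, so it equals no $A_\alpha$ and is not in $\A$. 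This contradicts downward preservation uniformly, for every countable $\mn$, with no case distinction on $N\cap\omega_1$ (which for an arbitrary such $\mn$ need not even be an ordinal).
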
 

\begin{proof}
Let $M$ be the union of ${2}^\omega$, ${2}^{<\omega}$ and ${2}^\omega \times\omega_1$. The relations of the structure $\mm$ are
\begin{enumerate}
\item  $R_i^{\mm}=\{a\char 94\langle i\rangle:a\in {2}^{<\omega}\}$ ($i=0,1$).
\item  $R_2^{\mm}=\{(a,b)\in (2^{<\omega})\times 2^\omega: a\triangleleft b\}$.
\item  $R_3^{\mm}=\{\emptyset\}.$
\item  $R_4^{\mm}={2}^\omega\cup {2}^{<\omega}.$
\item  $R_5^{\mm}=\{(a,(a,\alpha)): a\in 2^\omega, \alpha<\omega_1\}.$
\item  $R_6^{\mm}={2}^\omega.$
\item  $R_7^{\mm}={2}^{<\omega}.$
\item  $Q_1^{\mm}={2}^{\omega}\times\omega_1.$
\item  $Q_2^{\mm}=\{(a,\alpha)\in Q_1: (a\in A_0\wedge\alpha<\omega)\vee
(a\in A_2\setminus A_1\wedge\alpha<\omega_1)\}.$
\item $Q_3^{\mm}=\{(a,b) : \exists\alpha(a\in A_\alpha\wedge b\in A_{\omega_1}\setminus A_\alpha)\}.$
\end{enumerate} 
Suppose $\mn\preccurlyeq^-_{L^{d}_\A}\mm$ is countable. Let $\phi(x)$ be the existential second order formula
$$R_6(x)\wedge\exists F(\mbox{$F$ is a one-one function from $R_7$ onto $\{y:Q_2(x,y)\}$}).$$

\begin{itemize}
\item $\mm\models\phi(a)$ if and only if $a\in A_0^{\mm}$.
\item $\mn\models\phi(a)$ if and only if $a\in A_0^{\mn}\cup (A_2^{\mn}\setminus A_1^{\mn})$.
\end{itemize}

Thus   $$\mm\models (Q_\A x_0x_1)(\psi_0(x_0,x_1,\bc),\psi_1(x_0,x_1,\bc),\psi_2(x_0,x_1,\bc),\psi_3(x_0,\bc),\psi_4(x_0,\bc))$$
but 
 $$\mn\not\models (Q_\A x_0x_1)(\psi_0(x_0,x_1,\bc),\psi_1(x_0,x_1,\bc),\psi_2(x_0,x_1,\bc),\psi_3(x_0,\bc),\psi_4(x_0,\bc))$$
\end{proof}

Despite the negative result of Theorem~\ref{nols}, Theorem~\ref{lsk} still holds for the fragment of  $L^d_\A$ obtained by dropping existential second order quantifiers.

\begin{definition}
Let $L^{d0}_\A$ be defined as $L^d_\A$ (Definition~\ref{19}) except that existential second order quantification is not allowed. Let $L^{d1}_\A$ be defined as the extension of $L^d_\A$ by adding negation to the logical operations.
\end{definition}

Clearly, $L^{d0}_\A$ is a positive logic and it satisfies the Compactness Theorem because even $L^d_\A$ does. The logic $L^{d1}_\A$ is an abstract logic in the sense of \cite{MR0244013}. Unlike our positive logics, it is closed under negation and also closed under substitution. Note that $L^{d0}_\A\le L^{d1}_\A$. 

\begin{theorem}[Downward L\"owenheim-Skolem-Tarski Theorem]\label{yes}
Sup\-pose $\mm$ is a model for a countable vocabulary and $X\subseteq M$ is countable. Then there is $\mn\preccurlyeq_{L^{d1}_\A}\mm$  such that $X\subseteq N$ and $|N|\le\aleph_0$.
In particular,  $\mn\preccurlyeq_{L^{d0}_\A}\mm$.
\end{theorem}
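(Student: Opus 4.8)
The plan is to run the elementary-submodel construction of Theorem~\ref{lsk}, but to carry the induction as a \emph{two-sided} equivalence, since the presence of negation means $\mn\preccurlyeq_{L^{d1}_\A}\mm$ requires $\mm\models\phi(\bar a)\Leftrightarrow\mn\models\phi(\bar a)$ for all $\bar a\in N$. First I would fix $K\prec H_\theta$ countable (with $\theta\ge(2^\omega)^+$) containing the same parameters as in Theorem~\ref{lsk} — in particular $\A$, $\omega_1$, the Skolem data $g$, and $X$ — and, crucially, chosen so that $\delta=K\cap\omega_1\in S$; then set $\mn=\mm\rest K$, whose closure under the function symbols is checked exactly as in Theorem~\ref{ls}. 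The induction on $\phi$ then treats the connectives, $\exists$, $\forall$ (via the Skolem functions), and negation symmetrically and routinely. The essential structural point, distinguishing this case from $L^d_\A$, is that $L^{d1}_\A$ carries no existential second-order quantifier, so the induction hypothesis delivers genuine two-sided absoluteness of every matrix formula $\psi_i$; consequently $\mn_{\bar\psi}$ is \emph{exactly} the substructure of $\mm_{\bar\psi}$ induced on $N$. This is precisely what fails in Proposition~\ref{nols}, where a non-absolute $\Sigma^1_1$ matrix made $\mn_{\bar\psi}$ differ from the induced substructure.

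Next I would handle the only nontrivial induction step, the quantifier $Q_\A$. Since $\Theta_{\rm TL}$ is first order, $\mm_{\bar\psi}\models\Theta_{\rm TL}\Leftrightarrow\mn_{\bar\psi}\models\Theta_{\rm TL}$ by the first-order part of the induction; if it fails both sides of the $Q_\A$-equivalence are false, so I may assume it holds, which by (\ref{unique}) gives unique representation. I would then establish $\Omega(\mn_{\bar\psi})=\Omega(\mm_{\bar\psi})\cap K$: any $\eta$ represented by some $a\in N$ is, by uniqueness, definable from $a$ inside $H_\theta$, hence $\eta\in K$; conversely, for $\eta\in\Omega(\mm_{\bar\psi})\cap K$, elementarity of $K$ together with $g\in K$ locates the representative $a$ and the witnessing branches $b^n_0,\ldots,b^n_{n-1}$ inside $K$, i.e.\ in $N$. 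In particular $\Omega(\mn_{\bar\psi})\subseteq K$. Combining this with $K\cap A_{\omega_1}=A_\delta$ (which holds because $\delta\in S$, just as in Theorem~\ref{lsk}), I obtain the key identity $\Omega(\mn_{\bar\psi})\cap A_{\omega_1}=\Omega(\mm_{\bar\psi})\cap A_\delta$.

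Finally I would read off both implications from elementarity. We have $\mm\models Q_\A\bar\psi(\bc)$ iff $H_\theta\models$ ``$\Omega(\mm_{\bar\psi})\cap A_{\omega_1}\in\A$'' iff $K\models$ the same statement. Inside $K$ the set $\Omega(\mm_{\bar\psi})$ computes to $\Omega(\mn_{\bar\psi})$ and $A_{\omega_1}$ to $A_\delta$, so $K$'s value of the intersection is the true set $\Omega(\mn_{\bar\psi})\cap A_{\omega_1}$; because this set is $\subseteq A_\delta\subsetneq A_{\omega_1}$ and $A_\delta\in\A$ (again as $\delta\in S$), membership in $\A$ is absolute between $K$ and $V$, giving $\mm\models Q_\A\bar\psi(\bc)\Leftrightarrow\mn\models Q_\A\bar\psi(\bc)$. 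The forward direction simply reproduces Cases~1 and~2 of Theorem~\ref{lsk}. I expect the \emph{backward} direction $\mn\models Q_\A\Rightarrow\mm\models Q_\A$ to be the main obstacle, and it is exactly the point not needed for the one-sided $\preccurlyeq^-$ of Theorem~\ref{lsk}: a priori $\Omega(\mm_{\bar\psi})\cap A_{\omega_1}$ might lie outside $\A$ while its trace on $A_\delta$ lands in $\A$, and the only way to rule this out is the elementarity computation above together with the closure point $\delta\in S$ forcing $\Omega(\mn_{\bar\psi})\subseteq K$, which confines the relevant intersection below $A_\delta$. The ``in particular'' clause for $L^{d0}_\A$ is then immediate from $L^{d0}_\A\le L^{d1}_\A$.
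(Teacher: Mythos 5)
Your construction and the overall shape of your argument coincide with the paper's own proof: it too fixes a countable $K\prec H_\theta$ with $\delta=K\cap\omega_1\in S$, restricts $\mm$ to $K$, and proves a two-sided claim by induction (and it too reads $L^{d1}_\A$ as \emph{not} containing $\exists R$, which is indeed the only reading under which the theorem can hold). The paper streamlines the bookkeeping by Morleyizing first: for every $L^{d1}_\A$-formula $\phi(\bz)$ it adds a predicate $R_\phi$ with $\mm\models\forall\bz(\phi(\bz)\leftrightarrow R_\phi(\bz))$, and the inductive claim becomes $\mn\models\phi\leftrightarrow R_\phi$. One payoff is that the matrices $\bar\psi$ under $Q_\A$ may then be taken atomic, so transferring $\Theta_{\rm TL}$ is genuinely a first-order matter; your appeal to ``the first-order part of the induction'' is slightly off as stated, since $\Theta_{\rm TL}$ with the $\psi_i$ substituted in is not a subformula of $(Q_\A x_0x_1)\bar\psi$, but this is easily repaired either by Morleyizing or by a routine sub-induction over $\Theta_{\rm TL}$ using elementarity of $K$ to produce witnesses.

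The genuine gap sits exactly at the point you yourself flag as the crux: the backward implication of the $Q_\A$ step. Put $C=\Omega(\mm_{\bar\psi})\cap A_{\omega_1}$, an element of $K$. Elementarity gives: $K\models$``$C\in\A$'' if and only if $C\in\A$ holds in $V$ --- \emph{not} if and only if $C\cap K\in\A$. Since (granting your identities) $\mn\models Q_\A\bar\psi(\bc)$ means precisely $C\cap K=\Omega(\mn_{\bar\psi})\cap A_{\omega_1}\in\A$, what must be proved is the implication $C\cap K\in\A\Rightarrow C\in\A$, and your stated justification --- the trace is $\subseteq A_\delta\subsetneq A_{\omega_1}$ and $A_\delta\in\A$, hence ``membership in $\A$ is absolute between $K$ and $V$'' --- is not a proof of it; none of the facts you have assembled (the trace identity, $\Omega(\mn_{\bar\psi})\subseteq K$, $K\cap A_{\omega_1}=A_\delta$, the iff-chain through $K$) excludes your own bad case, $C\notin\A$ with $C\cap K\in\A$. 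What excludes it is one further use of elementarity, reflecting a witness of the failure into $K$. If $C\cap K=A_\beta$ with $\beta<\delta$, then $A_\beta\in K$; if $C\ne A_\beta$, then since $C,A_\beta\in K\prec H_\theta$ there is $\eta\in K$ with $\eta\in(C\setminus A_\beta)\cup(A_\beta\setminus C)$, and both alternatives are impossible: in the first, $\eta\in C\cap K=A_\beta$; in the second, $\eta\in A_\beta=C\cap K\subseteq C$. Hence $C=A_\beta\in\A$. If $C\cap K=A_\delta$ and $C\ne A_{\omega_1}$, then (as $C\subseteq A_{\omega_1}$ and $C,A_{\omega_1}\in K$) there is $\eta\in K$ with $\eta\in A_{\omega_1}\setminus C$, whence $\eta\in A_{\omega_1}\cap K=A_\delta=C\cap K\subseteq C$, a contradiction; hence $C=A_{\omega_1}\in\A$. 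With this reflection argument inserted, your proof closes and is essentially the one the paper intends when it says the claim is proved ``as in the proof of Theorem~\ref{lsk}''.
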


\begin{proof} This is as in the proof of Theorem~\ref{lsk}.
We first expand $\mm$ as follows: For every $L^{d1}_\A$-formula $\phi(\bz)$, where  $\bz=z_0,\ldots,z_{k-1}$, there is a predicate symbol $R_\phi$ of arity $k$ such that  $\mm\models \forall\bz(\phi(\bz)\leftrightarrow R_\phi(\bz))$. Let $\tau$ be the original vocabulary of $\mm$ and $\tau^*$ the vocabulary of the expansion. For any atomic formulas $\bar \psi$  of the vocabulary $\tau^*$ let     
$g(n,k,\bar \psi, \eta)$ be the function which maps $n,k,{\bar \psi}$ and $\eta\in 2^n$ to $\Gamma^{n,k}_{\bar \psi,\eta}(y_0,\ldots,y_n,x,\bz)$. Let $K\prec H_\theta$, where $\theta\ge(2^\omega)^+$ such that $M\subseteq H_\theta$, $|K|=\aleph_0$, $\{\A,\omega_1,\tau^*,\mm, X,g\}\cup \omega_1\cup\tau\cup X\subseteq K$, and $\delta=K\cap\omega_1\in S$. Let $\mn$ be the restriction of $\mm$ to $K$, i.e. the universe $N$ of $\mn$ is $M\cap K$ and the constants, relations and functions of $\mm$ are relativized to $N$.

As in the proof of Theorem~\ref{ls},  $N$ is closed under the interpretations of function symbols of the vocabulary of $\mm$. 

\medskip

\noindent{\bf Claim:\ } If $\phi(\vec{x})$ is a $\tau^*$-formula in $L^{d1}_\A$ and $\vec{c}\in N$, then $\mn\models\phi(\vec{c})\leftrightarrow R_\phi(\vec{c})$. 
\medskip

The proof of this claim is as in the proof of Theorem~\ref{lsk}. Since $\mn\subseteq\mm$ in the vocabulary $\tau^*$, the claim implies $\mn\preccurlyeq_{L^{d1}_\A}\mm$.
\end{proof}

The logic $L^{d1}_\A$ is closed under negation and satisfies the Downward L\"owen\-heim-Skolem Theorem. Thus it cannot satisfy the Compactness Theorem, although its sublogic $L^{d0}_\A$ does.

\bibliographystyle{plain}
\bibliography{positive}

\begin{thebibliography}{10}

\bibitem{zbMATH03941493}
J.~Barwise and S.~Feferman, editors.
\newblock {\em Model-theoretic logics}.
\newblock Perspect. Math. Log. Springer, Berlin, 1985.

\bibitem{MR403952}
Jon Barwise and John Schlipf.
\newblock An introduction to recursively saturated and resplendent models.
\newblock {\em J. Symbolic Logic}, 41(2):531--536, 1976.

\bibitem{MR2436146}
Ita\"{\i} Ben~Yaacov, Alexander Berenstein, C.~Ward Henson, and Alexander
  Usvyatsov.
\newblock Model theory for metric structures.
\newblock In {\em Model theory with applications to algebra and analysis.
  {V}ol. 2}, volume 350 of {\em London Math. Soc. Lecture Note Ser.}, pages
  315--427. Cambridge Univ. Press, Cambridge, 2008.

\bibitem{MR2371196}
Ita\"{\i} Ben~Yaacov and Bruno Poizat.
\newblock Fondements de la logique positive.
\newblock {\em J. Symbolic Logic}, 72(4):1141--1162, 2007.

\bibitem{MR1059055}
C.~C. Chang and H.~J. Keisler.
\newblock {\em Model theory}, volume~73 of {\em Studies in Logic and the
  Foundations of Mathematics}.
\newblock North-Holland Publishing Co., Amsterdam, third edition, 1990.

\bibitem{MR819533}
H.-D. Ebbinghaus.
\newblock Extended logics: the general framework.
\newblock In {\em Model-theoretic logics}, Perspect. Math. Logic, pages 25--76.
  Springer, New York, 1985.

\bibitem{MR2134728}
Marta Garc\'{\i}a-Matos and Jouko V\"{a}\"{a}n\"{a}nen.
\newblock Abstract model theory as a framework for universal logic.
\newblock In {\em Logica universalis}, pages 19--33. Birkh\"{a}user, Basel,
  2005.

\bibitem{MR881268}
Tapani Hyttinen.
\newblock Games and infinitary languages.
\newblock {\em Ann. Acad. Sci. Fenn. Ser. A I Math. Dissertationes}, (64):32,
  1987.

\bibitem{MR244012}
Per Lindstr\"{o}m.
\newblock First order predicate logic with generalized quantifiers.
\newblock {\em Theoria}, 32:186--195, 1966.

\bibitem{MR0244013}
Per Lindstr\"{o}m.
\newblock On extensions of elementary logic.
\newblock {\em Theoria}, 35:1--11, 1969.

\bibitem{MR2807973}
Allen~L. Mann, Gabriel Sandu, and Merlijn Sevenster.
\newblock {\em Independence-friendly logic}, volume 386 of {\em London
  Mathematical Society Lecture Note Series}.
\newblock Cambridge University Press, Cambridge, 2011.
\newblock A game-theoretic approach.

\bibitem{MR1083551}
S.~Shelah.
\newblock {\em Classification theory and the number of nonisomorphic models},
  volume~92 of {\em Studies in Logic and the Foundations of Mathematics}.
\newblock North-Holland Publishing Co., Amsterdam, second edition, 1990.

\bibitem{MR0297554}
Saharon Shelah.
\newblock Every two elementarily equivalent models have isomorphic ultrapowers.
\newblock {\em Israel J. Math.}, 10:224--233, 1971.

\bibitem{MR2116833}
Saharon Shelah and Jouko V\"{a}\"{a}n\"{a}nen.
\newblock A note on extensions of infinitary logic.
\newblock {\em Arch. Math. Logic}, 44(1):63--69, 2005.

\bibitem{MR2351449}
Jouko V\"{a}\"{a}n\"{a}nen.
\newblock {\em Dependence logic}, volume~70 of {\em London Mathematical Society
  Student Texts}.
\newblock Cambridge University Press, Cambridge, 2007.
\newblock A new approach to independence friendly logic.

\end{thebibliography}
\end{document}